\newtheorem{thm}{Theorem}[section]
\newtheorem*{thm*}{Theorem}
\newtheorem*{lem*}{Lemma}
\newtheorem{prop}[thm]{Proposition}
\newtheorem{cor}[thm]{Corollary}
\newtheorem{conj}[thm]{Conjecture}
\newtheorem{rmk}[thm]{Remark}
\theoremstyle{definition}
\newtheorem{example}{Example}
\def\Z{\mathbf{Z}}
\def\F{\mathbf{F}}
\def\R{\mathbf{R}}
\def\Q{\mathbf{Q}}
\def\C{\mathbf{C}}
\def\H{\mathbf{H}}
\def\E{\mathcal{E}}
\def\a{\alpha}
\def\b{\beta}
\def\w{\omega}
\def\z{\zeta}
\def\Qcyc{\Q_{\text{cyc}}}
\def\Zpt{\Z_p\llbracket T \rrbracket}
\def\Gal{\text{Gal}}
\DeclareFontFamily{U}{wncy}{}
\DeclareFontShape{U}{wncy}{m}{n}{<->wncyr10}{}
\DeclareSymbolFont{mcy}{U}{wncy}{m}{n}
\DeclareMathSymbol{\Sha}{\mathord}{mcy}{"58}
\def\sse{\subseteq}
\title{The Iwasawa $\mu$-invariant of certain elliptic curves of analytic rank zero}
\author[]{Adithya Chakravarthy}
\date{}
\begin{document}
	
	\maketitle	
	
	\begin{abstract}
     This paper is about the Iwasawa theory of elliptic curves over the cyclotomic $\Z_p$-extension $\Qcyc$ of $\Q$. We discuss a deep conjecture of Greenberg that if $E/\Q$ is an elliptic curve with good ordinary reduction at $p$, and $E[p]$ is irreducible as a Galois module, then the Selmer group of $E$ over $\Qcyc$ has $\mu$-invariant zero. We prove new cases of Greenberg's conjecture for some elliptic curves of analytic rank $0$. The proof involves studying the $p$-adic $L$-function of $E$. The crucial input is a new technique using the Rankin-Selberg method.   
	\end{abstract}

    \section{Introduction}

    We begin with a fundamental theorem of Iwasawa, which serves as the starting point of Iwasawa theory.  Let $K$ be a number field and let $K_{\infty}/K$ be a $\Z_p$-extension: a Galois extension with Galois group isomorphic to the additive group $\Z_p$ of $p$-adic integers. For each integer $n > 0$ there is a unique subfield $K_n$ of $K_{\infty}$ of degree $p^n$ over $K$. Iwasawa proved the following now famous theorem about the growth of class numbers in such towers.	 
	\begin{thm*}[Iwasawa]
		Let $K$ be a number field and let $K_{\infty}/K$ be a $\Z_p$ extension with layers $K_n$.  Suppose that $p^{e_n}$ is the exact power of $p$ dividing the class number of $K_n$. Then there exist integers $\mu, \lambda, \nu$  such that 
		\[ e_n = \mu p^n + \lambda n + \nu \]
		for all sufficiently large values of $n$.
	\end{thm*}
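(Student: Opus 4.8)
\medskip

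\noindent\emph{Proof proposal.} The plan is to run the module-theoretic argument over the Iwasawa algebra, of which this theorem is the historical prototype. Write $\Gamma=\Gal(K_\infty/K)\cong\Z_p$, fix a topological generator $\g$, and identify the completed group algebra $\Z_p\llbracket\Gamma\rrbracket$ with $\Lambda:=\Zpt$ by $\g\mapsto 1+T$. For $n\ge0$ put $\w_n:=(1+T)^{p^n}-1\in\Lambda$, so that $\Gamma_n:=\Gamma^{p^n}=\Gal(K_\infty/K_n)$ corresponds to $(\w_n)$ and, for $n\ge m$, the quotient $\w_n/\w_m$ is a distinguished polynomial of degree $p^n-p^m$. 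Now introduce the arithmetic module: let $L_\infty$ be the maximal unramified abelian pro-$p$ extension of $K_\infty$ and set $X:=\Gal(L_\infty/K_\infty)$. Since $L_\infty/K$ is Galois, $\Gamma$ acts on $X$ by conjugation, making $X$ a $\Lambda$-module, and class field theory identifies $X$ with $\varprojlim_n A_n$, where $A_n$ is the $p$-primary part of the ideal class group of $K_n$; thus $\#A_n=p^{e_n}$, and it is enough to control $\log_p\#A_n$ for $n\gg0$.

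The first substantive step is to show that $X$ is a finitely generated torsion $\Lambda$-module. Finite generation follows from topological Nakayama's lemma once one checks that the coinvariants $X/\w_0X$ --- which a short class-field-theory computation presents as an extension of a quotient of $A_0$ by a group built from the decomposition of the ramified primes --- are finite. Torsion-ness, i.e.\ vanishing of the $\Lambda$-rank, then follows from the comparison of the next step: every $A_n$ is finite (class groups of number fields are finite), so the quotient of $X$ appearing there is finite for all $n$, whereas a free $\Lambda$-summand of $X$ would force it to be infinite.

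The heart of the argument is the descent comparison relating $X$ to the finite layers. Choose $n_0$ large enough that every prime of $K$ ramified in $K_\infty/K$ is already totally ramified in $K_\infty/K_{n_0}$. Working inside $\Gal(L_\infty/K)$, whose quotient by $X$ is $\Gamma$, I would use that $L_\infty/K_\infty$ is unramified to see that each inertia subgroup meets $X$ trivially and hence injects into $\Gamma$. Over $K_{n_0}$ all the ramified primes $\m_1,\dots,\m_s$ are totally ramified, so one may pick inertia generators in $\Gal(L_\infty/K_{n_0})$ all mapping to a topological generator $\g':=\g^{p^{n_0}}$ of $\Gal(K_\infty/K_{n_0})$; writing them as $\s,\s a_2,\dots,\s a_s$ with $a_i\in X$, set $Y:=$ the closure in $X$ of $(\g'-1)X+\sum_{i\ge2}\Z_p\,a_i$. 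A computation of the maximal unramified abelian subextension of $L_\infty$ over each $K_n$ then yields, for $n\ge n_0$, an isomorphism
\[ A_n\;\cong\;X\big/(\w_n/\w_{n_0})\,Y,\qquad [X:Y]=\#A_{n_0}<\infty, \]
which, when only one prime ramifies, simplifies (here $Y=(\g'-1)X$) to the classical statement $A_n\cong X/\w_nX$. I expect this step --- tracking the inertia and Frobenius contributions of several ramified primes and verifying that the displayed quotient genuinely computes $A_n$ --- to be the main obstacle; what surrounds it is essentially formal.

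Finally I would invoke the structure theorem for finitely generated $\Lambda$-modules: there is a pseudo-isomorphism
\[ X\;\longrightarrow\;\bigoplus_i\Lambda/(p^{a_i})\;\oplus\;\bigoplus_j\Lambda/(g_j(T)^{b_j}) \]
with the $g_j$ distinguished irreducible, and one puts $\mu:=\sum_i a_i$ and $\lambda:=\sum_j b_j\deg g_j$. Using the finiteness of $A_n$, one sees that $\w_n/\w_{n_0}$ is coprime to every $g_j$ once $n>n_0$, and a direct order computation --- the summands $\Lambda/(p^{a_i})$ producing the term $\mu p^n$ because $\w_n/\w_{n_0}$ has degree $p^n-p^{n_0}$, and the summands $\Lambda/(g_j^{b_j})$ producing the term $\lambda n$ --- gives that $\log_p$ of the order of this elementary module modulo $(\w_n/\w_{n_0})$ equals $\mu p^n+\lambda n+c$ for an absolute constant $c$ and all large $n$. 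Since a pseudo-isomorphism has finite kernel and cokernel, and $[X:Y]=\#A_{n_0}$ is a fixed finite number, the difference between $\log_p\#A_n$ and this expression is bounded and eventually constant. Folding the finitely many constants --- the defect of the pseudo-isomorphism, the index $[X:Y]$, and the terms depending on $n_0$ --- into a single integer $\nu$ yields $e_n=\mu p^n+\lambda n+\nu$ for all sufficiently large $n$, as claimed.
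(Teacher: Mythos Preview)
The paper does not supply its own proof of this theorem: it is quoted as classical background (attributed to Iwasawa) and immediately used to motivate the analogous statement for elliptic curves. So there is nothing in the paper to compare against; one can only assess whether your outline stands on its own.

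It does. What you have written is precisely the standard module-theoretic proof --- the one in Washington's \emph{Introduction to Cyclotomic Fields}, Chapter~13 (indeed the paper cites \cite{washington} for the structure theorem you invoke). The ingredients are all present and correctly assembled: the identification $\Z_p\llbracket\Gamma\rrbracket\cong\Lambda$, the module $X=\varprojlim A_n$, finite generation via Nakayama, the reduction to the totally-ramified-above-$K_{n_0}$ situation, the inertia-generator manipulation producing the submodule $Y$ with $A_n\cong X/(\w_n/\w_{n_0})Y$ for $n\ge n_0$, the structure theorem, and the order count on the elementary model. Your flagging of the descent step (tracking several ramified primes and verifying the displayed isomorphism) as the one genuinely delicate point is accurate.

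One small imprecision worth tightening when you write it up: the claim ``$\w_n/\w_{n_0}$ is coprime to every $g_j$ once $n>n_0$'' is not the right formulation --- some $g_j$ may well divide some $\w_n/\w_{n_0}$. What you actually need (and what the standard argument gives) is that for $n$ large enough, $\w_{n+1}/\w_n$ acts on $\Lambda/(g_j^{b_j})$ as multiplication by a $p$-adic unit times $p$, so that $\log_p\bigl|\Lambda/(g_j^{b_j},\w_n/\w_{n_0})\bigr|$ grows linearly in $n$ with slope $b_j\deg g_j$. This is a wording issue, not a gap in the strategy.
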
	
	The most important example of a $\Z_p$-extension of $K$ is the so-called \textit{cyclotomic $\Z_p$-extension} of $K$, denoted $K^{\text{cyc}}/K$. It is defined by letting $K^{\text{cyc}}$ be the appropriate subfield of $\cup_{n \geq 1} K(\z_{p^n})$. Iwasawa has conjectured the following about the cyclotomic $\Z_p$-extension.
	\begin{conj}[Iwasawa's $\mu=0$ Conjecture] Let $K$ be a number field and let $K^{\text{cyc}}$ be the cyclotomic $\Z_p$ extension of $K$. Then $\mu=0$. 
	\end{conj}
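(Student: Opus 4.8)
In full generality this conjecture is open; the only substantial case known is that of abelian $K/\Q$, due to Ferrero and Washington, and my plan is to follow their argument, commenting at the end on why the general case appears unreachable.

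Suppose first that $K/\Q$ is abelian, so $K \subseteq \Q(\z_N)$ for some $N$; it is standard that the conjecture for all abelian $K$ reduces to the case $K = \Q(\z_N)$. Let $\Delta = \Gal(\Q(\z_N)/\Q)$, let $X$ be the projective limit, up the cyclotomic tower, of the $p$-parts of the ideal class groups of $\Q(\z_N)$, and split $X = X^{+} \oplus X^{-}$ under complex conjugation, decomposing further into $\chi$-eigenspaces $X(\chi)$ over the characters $\chi$ of $\Delta$. Iwasawa's analysis of the analytic class number formula gives
\[ \mu(X^{-}) = \sum_{\chi \text{ odd}} \mu\bigl(f(T,\chi)\bigr), \]
where $f(T,\chi) \in \Zpt$ is the Iwasawa power series of the Kubota--Leopoldt $p$-adic $L$-function $L_p(s,\chi\w)$; since each summand is non-negative, $\mu(f(T,\chi))=0$ for all odd $\chi$ would force $\mu(X(\chi))=0$ for every odd $\chi$, and then a reflection (Spiegelung) argument --- bounding $\mu(X(\chi))$ by $\mu(X(\w\chi^{-1}))$ for $\chi$ even --- would give $\mu(X^{+})=0$ as well. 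Everything is thus reduced to the purely \emph{analytic} statement that $\mu(f(T,\chi))=0$ for every odd Dirichlet character $\chi$.

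To prove this I would make the usual substitution $1+T=(1+p)^{s}$, so that the coefficients of $f(T,\chi)$ acquire a closed form; unwinding the interpolation property expresses them, up to $p$-adic units, as finite $\chi$-twisted sums built from the base-$p$ digits of rational numbers of the form $a/(Fp^{m})$, with $F$ the conductor of $\chi$. If $\mu(f(T,\chi))>0$, then all of these twisted digit-sums vanish modulo $p$, for every $m$; a generating-function manipulation then collapses this infinite family of congruences into the assertion that a certain explicit $p$-adic number has an eventually constant base-$p$ expansion. That assertion is false --- this is the combinatorial heart of the matter, a ``$p$-adic normality'' statement about digit distributions, the Ferrero--Washington lemma (later reproved by Sinnott via rational functions over $\F_p$) --- and the resulting contradiction forces $\mu(f(T,\chi))=0$. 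I expect the bookkeeping across all residue classes $a$ and all levels $m$, and isolating precisely the right normality input, to be the technically demanding part of the abelian case.

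The genuine obstacle, however, is that this entire route is special to abelian $K$: identifying the arithmetic $\mu$-invariant with that of an \emph{analytically} constructed $p$-adic $L$-function rests on Stickelberger elements, generalized Bernoulli numbers and Kubota--Leopoldt interpolation, all of which are abelian phenomena, and for a general number field there is no known substitute --- nor, at present, any other viable mechanism. Beyond abelian fields and those that reduce to them (for instance via Kida's formula, once a suitable subfield is controlled), I do not have a plan that reaches the conjecture, and none is known.
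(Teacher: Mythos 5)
The statement you were asked to address is stated in the paper as an open \emph{conjecture}, not a theorem, and the paper offers no proof to compare against: it says explicitly that this is ``probably the deepest open problem from classical Iwasawa theory'' and that the only known progress is the Ferrero--Washington theorem for abelian $K/\Q$. You correctly recognized this, declined to fabricate a proof of an open problem, and instead gave an honest account of the one settled case. Your sketch of the Ferrero--Washington route --- reduce to $K=\Q(\zeta_N)$, decompose by characters of $\Delta=\Gal(\Q(\zeta_N)/\Q)$, invoke Iwasawa's identification of $\mu(X^-)$ with the $\mu$-invariants of the Kubota--Leopoldt power series $f(T,\chi)$ for odd $\chi$, prove the analytic vanishing by the $p$-adic digit-distribution (``normality'') lemma, and then pass to $\mu(X^+)=0$ by a Kummer reflection argument --- is the standard argument and is accurate at the level of detail you give; Sinnott's reproof via rational functions over $\F_p$ is indeed the cleanest modern reference for the combinatorial core. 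Your closing assessment of why the method is intrinsically abelian (Stickelberger elements, generalized Bernoulli numbers, Kubota--Leopoldt interpolation) and why no non-abelian mechanism is currently known matches the paper's framing exactly. In short, there is no gap in your proposal, because you accurately report that the general statement has no proof; there is simply nothing in the paper to compare it against.
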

	This conjecture is probably the deepest open problem from classical Iwasawa theory. The only progress towards this comes from a paper of Ferrero and Washington \cite{fw}, where they prove the conjecture in the case when $K/\Q$ is abelian. Other than this, the conjecture remains wide open and it seems intractable at present.

	This paper is about a generalization of Iwasawa's $\mu=0$ conjecture to the Iwasawa theory of \textit{elliptic curves}. We now briefly outline the main aspects of the Iwasawa theory of elliptic curves and then state the analogue of Iwasawa's $\mu=0$ conjecture in this context. In a landmark paper \cite{mazur}, Mazur observed that many features of classical Iwasawa theory could be used to study elliptic curves. This initiated the study of what we now call the Iwasawa theory of elliptic curves. If $E$ is an elliptic curve over a number field $K$, let $\Sha(E/K)[p^{\infty}]$ denote the $p$-primary part of the Tate-Shafarevich group of $E$ over $K$. We then have the following analogue \cite[Theorem 1.10]{greenberg} of Iwasawa's theorem.
	\begin{thm*}
		Let $\Q^{\text{cyc}} = \cup \Q_n$ denote the cyclotomic $\Z_p$ extension of $\Q$. Let $E/\Q$ be an elliptic curve, and let $p$ be a prime of good ordinary reduction. Assume that $\Sha(E/\Q_n)[p^{\infty}]$ is finite for all $n$ and let $p^{e_n}$ be the exact power of $p$ dividing $|\Sha(E/\Q_n)[p^{\infty}]|$. Then there exist integers $\mu, \lambda, \nu$  such that 
		\[ e_n = \mu p^n + \lambda n + \nu \]
		for all sufficiently large values of $n$.
	\end{thm*}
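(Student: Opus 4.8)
The plan is to reformulate the statement as a result about a module over the Iwasawa algebra and then run the algebraic argument that underlies Iwasawa's original theorem. Set $\Gamma=\Gal(\Q^{\text{cyc}}/\Q)\cong\Z_p$, fix a topological generator $\g_0$, and identify $\Lambda:=\Z_p\llbracket\Gamma\rrbracket$ with $\Zpt$ via $\g_0\mapsto 1+T$; write $\w_n=(1+T)^{p^n}-1$, so that the subgroup $\Gal(\Q^{\text{cyc}}/\Q_n)$ corresponds to the ideal $(\w_n)$. Let $X:=\bigl(\Sel(E/\Q^{\text{cyc}})[p^\infty]\bigr)^{\vee}$ be the Pontryagin dual of the $p$-primary Selmer group over $\Q^{\text{cyc}}$; it is a finitely generated $\Lambda$-module.

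First I would invoke Mazur's Control Theorem: because $p$ is a prime of good ordinary reduction, the natural restriction maps
\[
 s_n\colon \Sel(E/\Q_n)[p^\infty]\longrightarrow \Sel(E/\Q^{\text{cyc}})[p^\infty]^{\,\Gal(\Q^{\text{cyc}}/\Q_n)}
\]
have finite kernel and cokernel whose orders are bounded independently of $n$. Dualizing identifies the target with $X/\w_n X$, so $\bigl(\Sel(E/\Q_n)[p^\infty]\bigr)^{\vee}$ and $X/\w_n X$ differ by a finite, uniformly bounded amount. Combining this with the descent exact sequence
\[
 0\to E(\Q_n)\otimes\Q_p/\Z_p\to \Sel(E/\Q_n)[p^\infty]\to \Sha(E/\Q_n)[p^\infty]\to 0
\]
and the hypothesis that $\Sha(E/\Q_n)[p^\infty]$ is finite — which forces the maximal divisible subgroup $\Seld(E/\Q_n)$ of $\Sel(E/\Q_n)[p^\infty]$ to be exactly $E(\Q_n)\otimes\Q_p/\Z_p$, whence $\Sha(E/\Q_n)[p^\infty]\cong\Sel(E/\Q_n)[p^\infty]/\Seld(E/\Q_n)$ — one obtains, with $(-)_{\mathrm{tor}}$ denoting the $\Z_p$-torsion submodule,
\[
 e_n \;=\; \ord\bigl|\,\bigl(\Sel(E/\Q_n)[p^\infty]^{\vee}\bigr)_{\mathrm{tor}}\,\bigr| \;=\; \ord\bigl|\,(X/\w_n X)_{\mathrm{tor}}\,\bigr| + O(1).
\]
So it suffices to prove that $\ord\bigl|(X/\w_n X)_{\mathrm{tor}}\bigr|$ equals $\mu p^n+\lambda n+\nu$ for $n\gg 0$ and some integers $\mu,\lambda,\nu$.

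This last point is the purely algebraic heart of the matter, and it reproduces Iwasawa's computation. By the structure theorem for finitely generated $\Lambda$-modules there is a $\Lambda$-homomorphism from $X$ to an elementary module $\Lambda^{r}\oplus\bigoplus_i\Lambda/(p^{a_i})\oplus\bigoplus_j\Lambda/(f_j^{b_j})$ (the $f_j$ distinguished irreducible) with finite kernel and cokernel; a standard lemma shows such a homomorphism changes $\ord\bigl|(X/\w_nX)_{\mathrm{tor}}\bigr|$ by a bounded amount, so one may replace $X$ by this elementary module and then compute summand by summand. The free summands contribute nothing to the $\Z_p$-torsion; for $\Lambda/(p^{a})$ one has $\#\,\Lambda/(p^{a},\w_n)=p^{a p^n}$, contributing $ap^n$; and for $\Lambda/(f^{b})$ with $f$ distinguished of degree $d$, the module is $\Z_p$-free of rank $bd$ and the $\Z_p$-torsion of the cokernel of multiplication by $\w_n$ has order $p$ to a power which, using $\w_n=\prod_{k=0}^{n}\Phi_{p^k}(1+T)$ and Iwasawa's estimate $\ord\bigl((1+\a)^{p^n}-1\bigr)=n+\text{const}$ for large $n$ (valid whenever $1+\a$ is not a $p$-power root of unity), is an eventually affine-linear function of $n$ — equal to $bd\,n+\text{const}$ when $f$ has no root of the form $\z_{p^m}-1$, and a (possibly smaller) affine-linear function in the finitely many exceptional cases. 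Summing the contributions gives $\ord\bigl|(X/\w_nX)_{\mathrm{tor}}\bigr|=\mu p^n+\lambda n+\nu$ for all large $n$, which is the desired conclusion.

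The step I expect to be the main obstacle is Mazur's Control Theorem, and specifically the \emph{uniform} boundedness in $n$ of $\ker s_n$ and $\mathrm{coker}\,s_n$. This is exactly where good ordinary reduction at $p$ is indispensable: the local Selmer condition at the prime(s) above $p$ is defined using the unramified quotient (equivalently the formal group) of $E[p^\infty]$, and ordinariness supplies a $\Gal$-stable short exact sequence $0\to E[p^\infty]^{+}\to E[p^\infty]\to E[p^\infty]^{-}\to 0$ both of whose graded pieces are cohomologically tractable, so the restriction maps on the local conditions at $p$ remain uniformly bounded as $n$ grows; one must simultaneously check that the (nonzero but bounded) local terms at the finitely many primes of bad reduction cause no trouble. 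A secondary, more technical nuisance is justifying the two ``bounded error'' reductions — the one using $\Sel(E/\Q_n)[p^\infty]/\Seld(E/\Q_n)$ and the one using the pseudo-isomorphism — uniformly in $n$, together with the case analysis for the finitely many ``cyclotomic-type'' irreducible factors $\Phi_{p^m}(1+T)$ of $\char_\Lambda(X)$.
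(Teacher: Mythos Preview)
The paper does not supply its own proof of this statement: it is quoted in the introduction as background, with a citation to \cite[Theorem~1.10]{greenberg}. So there is no in-paper argument to compare against. Your sketch is correct and is essentially the standard proof one finds in Greenberg's survey: Mazur's Control Theorem at a good ordinary prime to identify $\Sel(E/\Q_n)[p^\infty]$ with $(X/\w_nX)^\vee$ up to uniformly bounded error, the descent sequence plus finiteness of $\Sha$ to pass from the Selmer group to $\Sha$, and then Iwasawa's structure-theoretic computation of $\ord\bigl|(X/\w_nX)_{\mathrm{tor}}\bigr|$ on an elementary module. Your identification of the Control Theorem (and its uniform bound at the primes above $p$, via the ordinary filtration) as the substantive input is exactly right.
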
 
	The quantity $\mu_p(E)$ in the above formula is called the \emph{$\mu$-invariant} of $E$ over $\Q^{\text{cyc}}$. (See Section 2 for a more abstract definition of $\mu_p(E)$.) In analogy with Iwasawa's $\mu=0$ conjecture for class groups, one might expect that $\mu_p(E)$ always vanishes. This turns out to be \textit{false}. Indeed, Mazur \cite{mazur} discovered that the elliptic curve $X_0(11)$ has a nonzero $\mu$-invariant at the prime $p=5$. However, in a beautiful paper \cite[Conjecture 1.11]{greenberg}, Greenberg conjectures that one can rescue Iwasawa's conjecture in the following sense: 
	
	\begin{conj} Let $E/\Q$ be an elliptic curve, let $p$ be an odd prime of good ordinary reduction. Then there is a $\Q$-isogenous curve $E'$ such that $\mu_p(E') = 0$.  
	\end{conj}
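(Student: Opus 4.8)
Since the statement is Greenberg's conjecture itself, what follows is a plan that establishes it for the elliptic curves treated in this paper --- those of analytic rank $0$ --- together with an indication of where the method stops short of the general case. The plan is to peel off the case $E[p]$ reducible, reduce the remaining case to a statement about the $p$-adic $L$-function, and attack that statement by the Rankin--Selberg method. If $E[p]$ is reducible --- an isogeny-class invariant --- then by the explicit computations of Greenberg and Vatsal the $\mu$-invariants attached to the curves in the $\Q$-isogeny class of $E$ are expressible through Iwasawa invariants of Dirichlet $L$-functions together with local contributions that can be moved around the isogeny class, and the existence of an $E'$ with $\mu_p(E') = 0$ then follows from the vanishing of the relevant Dirichlet $\mu$-invariant, which is the theorem of Ferrero and Washington \cite{fw}. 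So I would assume henceforth that $E[p]$ is irreducible and aim to prove $\mu_p(E) = 0$ for $E$ itself. Under our hypotheses --- $p$ odd, good ordinary reduction at $p$, $E[p]$ irreducible (together with, if needed, a mild largeness condition on the image of $\bar\rho_{E,p}$, automatic for the generic such $E$) --- Kato's Euler system shows that the characteristic ideal of the Pontryagin dual of $\mathrm{Sel}(E/\Qcyc)$ divides $(L_p(E,T))$ in $\Zpt$, so that $\mu_p(E) \le \mu(L_p(E,T))$ and it suffices to prove $\mu(L_p(E,T)) = 0$. It is exactly here that analytic rank $0$ is used: it forces $L(E,1) \ne 0$, hence $L_p(E,T) \ne 0$, so that $\mu(L_p(E,T))$ is a finite nonnegative integer, and it makes the nonzero central values $L(E,\chi,1)$ available as direct inputs.

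Next I would reformulate $\mu(L_p(E,T)) = 0$ analytically: fixing a prime $\mathfrak{p} \mid p$ in the coefficient ring and normalizing by the N\'eron period $\Omega_E^{+}$, the interpolation property of $L_p(E,T)$ shows that $\mu(L_p(E,T)) = 0$ if and only if the normalized twisted central values $L(E,\chi,1)/\Omega_E^{+}$, as $\chi$ runs over the (necessarily even) Dirichlet characters of $p$-power conductor, are not all $\equiv 0 \pmod{\mathfrak{p}}$; equivalently, the plus modular symbol of $E$ is nonzero modulo $\mathfrak{p}$. To prove this I would bring in the Rankin--Selberg method. Writing $f = f_E$ for the weight-$2$ newform of $E$ and $g_\chi$ for the theta (or Eisenstein) series attached to $\chi$, the Rankin--Selberg identity expresses $L(f \times g_\chi, s)$ --- hence, at the relevant point, the value $L(E,\chi,1)$ up to explicit archimedean and ramified factors --- as a Petersson product $\langle f,\, \mathcal{E} \cdot \overline{g_\chi} \rangle$ against a real-analytic Eisenstein series $\mathcal{E}$. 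After dividing by $\Omega_E^{+}$ this product is $\mathfrak{p}$-integral, and via the unfolding computation and the $q$-expansion principle its reduction modulo $\mathfrak{p}$ is governed by the $q$-expansion of the mod-$\mathfrak{p}$ modular form $\bar f$. Since $E[p]$ is irreducible, $\bar f$ is non-Eisenstein --- in particular nonzero --- and one can then select a character $\chi$ (equivalently, a coefficient of $L_p(E,T)$) for which the normalized Rankin--Selberg period is a $\mathfrak{p}$-adic unit. This forces $\mu(L_p(E,T)) = 0$, which together with the reduction above completes the proof in the analytic-rank-$0$ case.

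\textbf{The main obstacle.} I expect the crux to be the last step --- the mod-$\mathfrak{p}$ non-vanishing of the \emph{normalized} Rankin--Selberg period, uniformly in the twist $\chi$. Two points must be controlled. First, the precise relationship between $\Omega_E^{+}$ and the period that the Rankin--Selberg integral produces intrinsically, so that the normalized inner product is not merely integral but has controlled $\mathfrak{p}$-adic valuation; this is a delicate period comparison. Second, the ``degenerate'' part of the Rankin--Selberg integral --- the constant term of $\mathcal{E}$ together with the Euler factors at $p$ and at the primes of bad reduction --- which must be prevented from conspiring with $\bar f$ to force divisibility by $\mathfrak{p}$; it is precisely this constant-term phenomenon that makes $\mu_5(X_0(11)) \ne 0$, so the irreducibility of $E[p]$ must be used in an essential way to exclude it, and doing so rigorously and uniformly in $\chi$ is the heart of the argument. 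Finally, lifting the analytic-rank-$0$ restriction would require a derivative version of this input --- a $p$-adic Gross--Zagier/Rankin--Selberg computation --- which lies beyond the present method and is why the conjecture remains open in general.
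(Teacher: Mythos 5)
The statement you are ``proving'' is Greenberg's conjecture itself, and you rightly treat your text as a plan for the analytic-rank-$0$ cases that the paper actually establishes. Your high-level framework matches the paper's: reduce via Kato's one-sided divisibility to the analytic statement $\mu^{\text{an}}_p(E)=0$, and then attack that via the Rankin--Selberg method. But the Rankin--Selberg mechanism you propose is genuinely different from the paper's, and your version has a gap at precisely the step you flag as the crux. You aim to prove mod-$\mathfrak{p}$ non-vanishing of \emph{some} normalized twisted value $L(E,\chi,1)/\Omega_E^+$ over characters $\chi$ of $p$-power conductor, using the $q$-expansion principle and the non-Eisenstein-ness of $\bar f$; you then admit you do not know how to complete the ``select a character'' step uniformly in $\chi$, nor how to control the period comparison. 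The paper sidesteps all of this. It only needs the constant term of $\mathcal{L}_p(E,T)$ to be a unit (which, when $p$ is not anomalous, is exactly $p\nmid L(E,1)/\Omega_E^+$), and it uses Rankin--Selberg in a purely archimedean way with no mod-$p$ $q$-expansion argument at all: it writes the holomorphic weight-$2$ form $G_{1,\chi}\cdot G_{\bar\chi}$ (with $\chi$ the fixed quadratic character mod $N$, not a $p$-power-conductor twist) in terms of the Eisenstein series and the newforms $f_j$ of level $N$, observes that the coefficient of $f_j$ is $\tfrac{L(f_j,1)L(f_j,\chi,1)}{\pi^2\langle f_j,f_j\rangle}\tfrac{g(\chi)}{i}$, and reads off the $q^1$-coefficient to equate the sum of these coefficients (plus an explicit Eisenstein contribution) with $L(0,\chi)$. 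The decisive new ingredient you are missing is the combination of Guo's non-negativity of $L(f_j,1)$ and $L(f_j,\chi,1)$ --- which lets one bound each summand by the total --- with Ramar\'e's explicit bound on $L(1,\chi)$ (hence $L(0,\chi)$) and a denominator bound coming from Mazur's torsion theorem via modular symbols (Proposition~\ref{prop: L-value-denominators}). Together these produce an explicit upper bound on the numerator of the rational number $\tfrac{L(f,1)L(f,\chi,1)}{\pi^2\langle f,f\rangle}\tfrac{g(\chi)}{i}$, so that any $p$ above the stated threshold simply cannot divide it. This is why the paper's theorem carries a lower bound on $p$ in terms of $N$ and the modular degree; your proposed mod-$p$ non-vanishing argument, if it could be completed, would be a stronger statement, but as written it is not an argument --- the ``governed by the $q$-expansion of $\bar f$'' step is an assertion, not a proof, and it is exactly there that the paper substitutes positivity and an explicit bound.
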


	That is, Greenberg predicts that $\mu$ can be made zero after perhaps shifting by an isogeny. In particular, the above conjecture implies:
	
	\begin{conj}[Greenberg's $\mu=0$ Conjecture] 
		\label{greenberg}
		Let $E/\Q$ be an elliptic curve, and let $p$ be an odd prime of good ordinary reduction. If $E[p]$ is \textit{irreducible} as a Galois module, then $\mu_p(E)=0$.  
	\end{conj}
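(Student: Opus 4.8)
The plan is to reduce the vanishing of $\mu_p(E)$ to showing that some modular symbol attached to $E$ is a $p$-adic unit, and then to exhibit such a unit by a Rankin--Selberg computation driven by the irreducibility of $E[p]$.

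\textbf{Step 1 (reduction to the $p$-adic $L$-function).} Let $X$ be the Pontryagin dual of the $p$-primary Selmer group of $E$ over $\Qcyc$ and let $L_p(E,T)\in\Zpt$ be the Mazur--Swinnerton-Dyer $p$-adic $L$-function of $E$. First I would apply Kato's Euler system to obtain the divisibility $\char(X)\mid\big(L_p(E,T)\big)$ in $\Zpt$. Since the $\mu$-invariant is additive along divisibilities in $\Zpt$ and is always nonnegative, this gives $0\le\mu_p(E)\le\mu\big(L_p(E,T)\big)$, so it is enough to prove $\mu\big(L_p(E,T)\big)=0$. Kato's divisibility holds whenever the image of $\bar\rho_{E,p}\colon\Gal(\overline{\Q}/\Q)\to\GL_2(\F_p)$ contains a conjugate of $\SL(\F_p)$; the remaining irreducible cases --- the CM curves, and the finitely many pairs $(E,p)$ with exceptional projective image --- I would treat separately, using Rubin's elliptic-unit Euler system in the CM case and, in the exceptional-image case, comparison with a large-image member of the $\Q$-isogeny class together with the fact that $\mu_p$ changes only by a local term that cannot turn a zero invariant into a positive one.

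\textbf{Step 2 (from $L_p$ to modular symbols).} By the interpolation property, $L_p(E,T)$ is built from the plus modular symbols $[a/p^n]^{+}$ of the weight-two newform $f_E$, normalised by the N\'eron period $\Omega_E^{+}$, and $\mu\big(L_p(E,T)\big)=0$ if and only if some modular symbol of $f_E$ in the relevant $\pm$ eigenspace (with this normalisation) is a $p$-adic unit. Heuristically this should follow from irreducibility of $E[p]$: if every modular symbol of $f_E$ were divisible by $p$, the mod-$p$ Hecke eigensystem of $f_E$ would lie in the Eisenstein part and $\bar\rho_{E,p}$ would be reducible. The difficulty is that turning ``all modular symbols $\equiv 0\pmod p$ $\implies$ Eisenstein congruence $\implies\bar\rho_{E,p}$ reducible'' into an unconditional implication is delicate for composite level $Np$, where the Eisenstein ideal is not governed by Mazur's prime-level analysis, at the bad primes of $E$, and for $p=3$; this is essentially why only partial results are currently known.

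\textbf{Step 3 (the Rankin--Selberg input, and the main obstacle).} Instead of arguing through the Eisenstein ideal, I would produce the required $p$-unit modular symbol directly. Choose an auxiliary family of eigenforms $g$ --- binary theta series attached to varying ring class characters of an imaginary quadratic field $K$ in which $p$ splits --- so that the central Rankin--Selberg value $L(f_E\otimes g,1)$, divided by $\Omega_E^{\pm}$ and a period of $g$, unfolds via a Gross-type formula (a sum over CM points) into an explicit finite $\Z_p$-linear combination of modular symbols of $f_E$. The $p$-adic valuation of this value is then controlled by local Rankin--Selberg integrals and by a non-vanishing-modulo-$p$ statement for the associated theta element, obtained in the style of Vatsal's equidistribution argument for CM points; the only mechanism by which \emph{every} admissible $g$ could give a value in the maximal ideal is a mod-$p$ congruence between $f_E$ and a form in the Eisenstein part, which irreducibility of $E[p]$ forbids. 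Combining this with Step~2 produces a $p$-unit modular symbol, hence $\mu\big(L_p(E,T)\big)=0$. The main obstacle is precisely this last point: making the equidistribution / non-vanishing-mod-$p$ input unconditional and uniform over \emph{all} pairs $(E,p)$ with $E[p]$ irreducible --- in particular ruling out the sparse configurations in which auxiliary congruences (to other newforms, not merely to Eisenstein series) conspire to force every Rankin--Selberg value into $p\Z_p$ --- and it is there that current techniques succeed only in restricted ranges, such as the analytic rank zero curves considered in this paper.
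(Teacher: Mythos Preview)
The statement in question is Conjecture~\ref{greenberg}, an \emph{open conjecture}; the paper does not prove it and offers no proof to compare against. What the paper actually establishes is the much narrower Theorem~\ref{thm: main-thm-elliptic-curves}: for curves of prime conductor $N\equiv 3\pmod 4$ with $L(E/\Q(\sqrt{-N}),1)\neq 0$, and for primes $p$ exceeding an explicit bound in $N$ and the modular degree, one has $\mu_p(E)=\lambda_p(E)=0$.

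Your proposal is a strategy sketch, not a proof, and you flag its own gaps. In Step~1, Kato's divisibility in the form you invoke requires the image of $\bar\rho_{E,p}$ to contain $\SL(\F_p)$, which is strictly stronger than irreducibility; your treatment of the CM and exceptional-image cases is a placeholder, not an argument. In Step~2, the implication ``all modular symbols divisible by $p$ $\Rightarrow$ Eisenstein congruence $\Rightarrow$ $\bar\rho_{E,p}$ reducible'' is precisely the unresolved core of the problem, as you acknowledge. In Step~3, the assertion that the only obstruction to mod-$p$ non-vanishing of the Rankin--Selberg values is an Eisenstein congruence is not correct in general: congruences between $f_E$ and other cuspidal newforms can force the same vanishing, as you yourself concede in your final sentence. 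Moreover, Vatsal-type equidistribution controls the \emph{anticyclotomic} $\mu$-invariant; transporting that information to the cyclotomic side is a separate, nontrivial step that is not known unconditionally. So what you have written is an honest outline of why the conjecture is hard, but it does not close any of the gaps.

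By contrast, the paper's argument for its restricted theorem is entirely explicit and involves no equidistribution and no varying auxiliary form: it applies Shimura's Rankin--Selberg identity to the fixed product $G_{1,\chi}\cdot G_{\overline\chi}$ for the single quadratic character $\chi$ of conductor $N$, uses Guo's non-negativity of central $L$-values to bound the positive rational number $L(f,1)L(f,\chi,1)g(\chi)/(i\pi^2\langle f,f\rangle)$ above by roughly $\sqrt{N}\log N$, controls its denominator via modular symbols and Mazur's torsion bound, and concludes that any prime larger than the resulting explicit bound cannot divide it. Then $\mathcal{L}_p(E,0)$ is a unit and Kato's theorem finishes. This says nothing about the general conjecture and makes no claim to.
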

	
	\begin{rmk}
		It is known by the work of Drinen \cite{drinen} that this conjecture is false over general number fields. So it applies particularly for curves $E$ over $\Q$. 
	\end{rmk}
	
	\begin{rmk}
		Schneider has given a simple formula for the effect of an isogeny on the $\mu$-invariant of $E$ for odd $p$ (see \cite[Isogeny Formula, Second Form]{schneider}). Thus, Greenberg's conjecture effectively predicts the $\mu$-invariant of any elliptic curve over $\Q$.
	\end{rmk}
	
	Greenberg's conjecture bifurcates into two cases, one of which seems to be much harder than the other. First, there is the case where $E[p]$ is reducible as a Galois module (the "reducible case") and second, there is the case where $E[p]$ is irreducible as a Galois module (the "irreducible case"). Most of the progress to date has been in the \textit{reducible} case, where we have the following theorem \cite[Theorem 1.3]{greenberg-vatsal} of Greenberg and Vatsal: if $E[p]$ has a $\Gal(\overline{\Q}/\Q)$-submodule which is either ramified at $p$ and even, or unramified at $p$ and odd, then $\mu_p(E)=0$. This theorem follows from a bootstrapping of the Ferrero-Washington Theorem. This result was furthered by Trifkovic \cite{mak}, who showed that Greenberg's conjecture was true for infinitely many curves $E$ with $E[p]$ reducible, for $p = 3$ or $5$. The core strategy in Trifkovic's paper was an explicit evaluation of the global duality pairing for finite flat group schemes over rings of integers.

    There has also been progress in proving that "$\mu=0$" can be propagated in families of modular forms. In a beautiful paper, Emerton-Pollack-Weston \cite{emerton-pollack-weston}, showed that "$\mu=0$" is invariant in Hida families. That is, if $f$ is a modular form with $\mu=0$, then that implies that every form in the Hida family passing through $f$ also has $\mu=0$. On another front, Ray recently \cite{ray2} showed the following result for $p=5$: there is a positive proportion of elliptic curves $E/\Q$ for which $\mu_5(E) = \lambda_5(E)=0$. Ray uses results of Bhargava-Shankar \cite{bhargava-shankar} on average ranks of Selmer groups. In another paper, Ray \cite{ray1} also showed that if $E[p]$ is irreducible, if the classical Iwasawa $\mu$-invariant vanishes for the splitting field $\Q(E[p])$, and a purely Galois theoretic condition on $E[p]$ holds, Greenberg's conjecture should hold.

    The irreducible case of Greenberg's conjecture, however, remains unsolved in general. In this paper, we prove new results about the irreducible case of Greenberg's Conjecture, specifically for some elliptic curves with analytic rank $0$: 
	
	\begin{thm}
        \label{thm: main-thm-elliptic-curves}
		Let $E/\Q$ be an elliptic curve of conductor $N$. Let $p$ be a prime of good ordinary reduction. Assume:
		\begin{enumerate}
			\item $L(E/K, 1) \neq 0$, where $K = \Q(\sqrt{-N})$ and $L(E/K,s)$ denotes the Hasse-Weil $L$-series of $E$ over $K$, 
			\item $p > 100 \sqrt{N} (\log N + 5) m_E$, where $m_E$ denotes the modular degree of $E$, 
                \item $p$ does not divide the the Manin constant of $E$, 
                \item $p$ is not anomalous for $E$, i.e: $p$ does not divide $\#E(\F_p)$,
			\item $N$ is a prime congruent to $3$ modulo $4$. 
		\end{enumerate} 
		Then $\mu_p(E) = 0$ and $\lambda_p(E)=0$.  
	\end{thm}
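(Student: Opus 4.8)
The plan is to deduce the vanishing of the Iwasawa invariants from the single assertion that $L(E/\Q,1)/\Omega_E^{+}$ is a $p$-adic unit, and to prove that assertion by a $p$-adic size estimate obtained through the Rankin--Selberg method over $K$. Since $L(E/K,s)=L(E/\Q,s)\,L(E^{(-N)}/\Q,s)$, where $E^{(-N)}$ is the twist of $E$ by the quadratic character attached to $K/\Q$ (of conductor $N$, as $N\equiv 3\pmod 4$), hypothesis (1) forces $L(E/\Q,1)\neq 0$, so $E$ has analytic rank $0$. Because $N$ is prime $E$ is non-CM (a CM curve being potentially good everywhere), and hypothesis (2) forces $p>163$, so $E[p]$ is irreducible and the image of $G_\Q$ on $T_pE$ is large; Kato's theorem then shows $\mathrm{Sel}(E/\Qcyc)^{\vee}$ is $\Lambda$-torsion with characteristic ideal dividing $L_p(E,T)$. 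Hence $\mu_p(E)\le\mu(L_p)$ and $\lambda_p(E)\le\lambda(L_p)$, and it suffices that $L_p(E,T)$ have a unit constant term. The interpolation formula gives $L_p(E,\mathbf 1)=(1-\alpha_p^{-1})^2\,L(E/\Q,1)/\Omega_E^{+}$ with $\alpha_p$ the unit root of $X^2-a_pX+p$, and $(1-\alpha_p^{-1})^2\in\Z_p^{\times}$ precisely because $\alpha_p\equiv a_p\pmod p$ and $p$ is non-anomalous (hypothesis (4)); so the task reduces to showing $\ord\big(L(E/\Q,1)/\Omega_E^{+}\big)=0$.

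\noindent\textbf{Passing to $K$.} The numbers $L(E/\Q,1)/\Omega_E^{+}$ and $L(E^{(-N)}/\Q,1)/\Omega_{E^{(-N)}}^{+}$ are $p$-integers: their denominators are supported on primes dividing $2\cdot c_{\mathrm{Manin}}\cdot\#E(\Q)_{\mathrm{tors}}$, all prime to $p$ by hypotheses (3), (4) and $p>163$. Combining this with the twisting period relation $\Omega_{E^{(-N)}}^{+}\sim\Omega_E^{-}/\sqrt N$ (up to a $p$-adic unit), one finds
\[
\frac{L(E/\Q,1)}{\Omega_E^{+}}\cdot\frac{L(E^{(-N)}/\Q,1)}{\Omega_E^{-}/\sqrt N}\;=\;\frac{L(E/K,1)}{\Omega_K},\qquad \Omega_K:=\Omega_E^{+}\,\Omega_E^{-}/\sqrt N,
\]
a product of two non-negative $p$-integers. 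Hence it is enough to prove $\ord\big(L(E/K,1)/\Omega_K\big)=0$; the summand $\ord\big(L(E/\Q,1)/\Omega_E^{+}\big)$ then vanishes automatically.

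\noindent\textbf{The Rankin--Selberg step.} Write $L(E/K,1)=L(f_E\otimes\theta_K,1)$, the Rankin--Selberg convolution of the weight-$2$ newform $f_E$ with the weight-$1$ theta series $\theta_K$ attached to $K$ (of level $N$ and nebentypus $\chi_K$, Eisenstein because $L(\theta_K,s)=\zeta(s)L(s,\chi_K)$). Unfolding the real-analytic Eisenstein series in the Rankin--Selberg integral and applying holomorphic projection produces a holomorphic cusp form $g\in S_2(\Gamma_0(N))$, defined over a number field of bounded degree, with $L(E/K,1)=c_{\infty}(N)\,\langle f_E,g\rangle$ for an explicit archimedean factor $c_{\infty}(N)$. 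Dividing by $\langle f_E,f_E\rangle$ exhibits $L(E/K,1)/\langle f_E,f_E\rangle$ as $c_{\infty}(N)$ times the $f_E$-isotypic coefficient of $g$, an algebraic number whose denominator divides the congruence number $\eta_{f_E}$, which for prime level coincides with $m_E$ up to powers of $2$ and $3$, hence $p$-adically. The standard relation between $\deg\phi_E$, $\mathrm{vol}(E(\C))$, $c_{\mathrm{Manin}}$ and $\langle f_E,f_E\rangle$ then yields $L(E/K,1)/\Omega_K=u\,m_E\,(\text{$f_E$-isotypic coefficient of }g)$ with $u\in\Z_{(p)}^{\times}$; in particular $m_E\,L(E/K,1)/\Omega_K\in\Z$. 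Bounding this integer by Cauchy--Schwarz, $\big|\,m_E\,L(E/K,1)/\Omega_K\,\big|\le m_E\,\|g\|/\|f_E\|$, with a trivial lower bound for $\|f_E\|$ and an explicit polynomial-in-$N$ bound for the Petersson norm of $g$ (equivalently, the approximate functional equation for the central value together with $|a_n(f_E)|\le d(n)\sqrt n$), gives
\[
\Big|\,m_E\cdot\frac{L(E/K,1)}{\Omega_K}\,\Big|\;\le\;100\sqrt N\,(\log N+5)\,m_E\;<\;p,
\]
the last step being hypothesis (2). A nonzero integer of absolute value $<p$ is prime to $p$, and the integer on the left is nonzero by hypothesis (1); since also $p>m_E$, we get $\ord\big(L(E/K,1)/\Omega_K\big)=0$, hence $\ord\big(L(E/\Q,1)/\Omega_E^{+}\big)=0$, hence $L_p(E,\mathbf 1)\in\Z_p^{\times}$, whence $\mu_p(E)=\lambda_p(E)=0$.

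\noindent\textbf{The main obstacle.} The crux is the Rankin--Selberg step: one must construct the auxiliary form $g$ so that it is \emph{simultaneously} defined over a small field with denominators controlled precisely by $m_E$ (which favours ``arithmetic'' Eisenstein series and a clean holomorphic projection) and of Petersson norm admitting an \emph{explicit} bound of the shape $c_1\sqrt N(\log N+c_2)$ (which favours truncated or smoothed Eisenstein series). Reconciling these, and carrying every constant through the unfolding, the holomorphic projection, and the comparison of $\langle f_E,f_E\rangle$ with the Néron periods, is where the real work lies; the explicit numerology $100(\log N+5)$ is the outcome of that bookkeeping.
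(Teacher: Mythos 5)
Your overall architecture agrees with the paper up to a point: you reduce to showing $L(E/\Q,1)/\Omega_E^+$ is a $p$-adic unit, you pass to $K = \Q(\sqrt{-N})$ via the factorization $L(E/K,s) = L(E/\Q,s)L(E^{(-N)}/\Q,s)$, you use the Rankin--Selberg convolution $L(f_E\times\theta_K,1)$ with the weight-$1$ Eisenstein/theta form, and you control denominators through the Manin constant and modular degree. The paper does essentially this (its Theorem~\ref{thm: main-thm-modular-forms} is your target statement, and its Proposition~\ref{prop: L-value-denominators} controls denominators via Mazur's torsion theorem, giving the explicit bound $576\,m_E$).

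However, there is a genuine gap in the \emph{archimedean bounding step}, which is exactly where the real content sits. You write the target as $|\langle f_E,g\rangle|/\langle f_E,f_E\rangle \le \|g\|/\|f_E\|$ and appeal to ``a trivial lower bound for $\|f_E\|$.'' No such trivial lower bound exists: making $\langle f_E,f_E\rangle \gg N^{1-\varepsilon}$ explicit and unconditional is a Hoffstein--Lockhart-type problem, and the constants are not easy to track --- this is precisely why a naive Cauchy--Schwarz approach does not give an explicit polynomial-in-$N$ constant like $100\sqrt{N}(\log N+5)$. The paper avoids the Petersson norm of $f_E$ entirely. It observes that $G_{\mathbf{1},\chi}^2$ is already a \emph{holomorphic} weight-$2$ form of level $N$ (no holomorphic projection needed), expands it in the basis $\{E,\ f_j\}$ of $M_2(\Gamma_0(N))$, and equates the coefficient of $q$: this produces the identity
\[
L(0,\chi)\;=\;c + \sum_j \dfrac{L(f_j,1)\,L(f_j,\chi,1)}{\pi^2\langle f_j,f_j\rangle}\cdot\dfrac{g(\chi)}{i}.
\]
Then --- and this is the step your proposal is missing --- \emph{Guo's theorem} that $L(f_j,1)\geq 0$ and $L(f_j,\chi,1)\geq 0$ for quadratic $\chi$ makes every summand non-negative, so each individual term is bounded by the whole sum, which in turn is $\leq L(0,\chi)$. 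Combined with Ramar\'e's explicit bound $|L(1,\chi)|\lesssim \tfrac14(\log N+5)$ (hence $L(0,\chi)\leq \tfrac{\sqrt N}{2\pi}(\log N+5)$), one gets the desired explicit size bound on the single term indexed by $f_E$ without ever needing a lower bound on $\langle f_E,f_E\rangle$. Without Guo's non-negativity theorem, you cannot isolate one term from the sum, and without that you would need the missing lower bound on $\|f_E\|$; so your approach, as written, does not close. If you replace the Cauchy--Schwarz step by the pair (Guo's positivity, Ramar\'e's bound) applied to the $q$-coefficient of $G_{\mathbf{1},\chi}^2$, the rest of your outline matches the paper's proof.
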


    The following are well-known consequences: 

    \begin{cor}
    \label{cor: sha-vanishes}
        Keep the notations and assumptions as above. Let $\Qcyc$ denote the cyclotomic $\Z_p$-extension of $\Q$ and let $\Q_n$ denote the $n$-th layer of $\Qcyc$. Then: 
        \begin{enumerate}
            \item the Mordell-Weil rank of $E(\Q_n)$ is zero for all $n \geq 1$, and 

            \item the $p$-primary part of $\Sha(E/\Q)$ vanishes.
        \end{enumerate}
    \end{cor}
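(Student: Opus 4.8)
The plan is to deduce both statements from the vanishing $\mu_p(E) = \lambda_p(E) = 0$ provided by Theorem~\ref{thm: main-thm-elliptic-curves}, together with Mazur's control theorem. First I would record that hypothesis~(1) already forces $L(E/\Q,1) \neq 0$: since $N \equiv 3 \pmod 4$, the imaginary quadratic field $K = \Q(\sqrt{-N})$ has discriminant $-N$, so its associated quadratic character $\chi_K$ has conductor $N$, and Artin formalism gives the factorisation $L(E/K,s) = L(E/\Q,s)\,L(E \otimes \chi_K, s)$; hence $L(E/K,1)\neq 0$ implies $L(E/\Q,1)\neq 0$ (and also $L(E\otimes\chi_K,1)\neq 0$). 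In particular $E(\Q)$ and $\Sha(E/\Q)$ are finite, by Kolyvagin's theorem. Write $\Lambda = \Zpt$ and let $X_\infty = \mathrm{Sel}_{p^\infty}(E/\Qcyc)^{\vee}$ denote the Pontryagin dual of the $p$-primary Selmer group over $\Qcyc$; this is a torsion $\Lambda$-module (by Kato's theorem, or as established in the course of proving Theorem~\ref{thm: main-thm-elliptic-curves}), and $\mu_p(E), \lambda_p(E)$ are by definition its Iwasawa $\mu$- and $\lambda$-invariants.

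Next I would turn $\mu_p(E)=\lambda_p(E)=0$ into the statement $X_\infty = 0$. Their simultaneous vanishing says that the characteristic ideal of the torsion $\Lambda$-module $X_\infty$ is the unit ideal, so $X_\infty$ is pseudo-null over the two-dimensional regular local ring $\Lambda$, hence finite. To rule out a nonzero finite module I would invoke Greenberg's theorem (see \cite{greenberg}) that, for an elliptic curve with good ordinary reduction at an odd prime $p$ whose $p$-primary Selmer group over $\Qcyc$ is $\Lambda$-cotorsion, $X_\infty$ has no nonzero finite $\Lambda$-submodule; its hypotheses are met here by the good ordinary reduction at $p$ together with conditions (2) and (4). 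A finite $\Lambda$-module having no nonzero finite submodule is itself zero, so $\mathrm{Sel}_{p^\infty}(E/\Qcyc) = 0$.

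Finally I would descend to the finite layers via Mazur's control theorem (\cite{mazur}, \cite{greenberg}). Because $p > 163$, Mazur's theorem on rational isogenies of prime degree shows that $E[p]$ is irreducible as a Galois module; moreover $\Q(\zeta_p) \subseteq \Q(E[p])$ has degree prime to $p$ over $\Q$ while every finite subextension of $\Qcyc/\Q$ is cyclic of $p$-power degree, so $E[p]$ cannot be contained in $E(\Qcyc)$. Together these give $E(\Qcyc)[p^{\infty}] = 0$, whence the restriction maps $\mathrm{Sel}_{p^\infty}(E/\Q_n) \to \mathrm{Sel}_{p^\infty}(E/\Qcyc)^{\Gamma_n}$, with $\Gamma_n = \Gal(\Qcyc/\Q_n)$, are injective for all $n \geq 0$ (and, using that $p$ is non-anomalous by~(4), in fact isomorphisms). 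Since the target is $0$, we conclude $\mathrm{Sel}_{p^\infty}(E/\Q_n) = 0$ for every $n$. Inserting this into the $p$-descent exact sequence $0 \to E(\Q_n)\otimes\Q_p/\Z_p \to \mathrm{Sel}_{p^\infty}(E/\Q_n) \to \Sha(E/\Q_n)[p^{\infty}] \to 0$ forces $E(\Q_n)\otimes\Q_p/\Z_p = 0$, so $E(\Q_n)$ has Mordell--Weil rank $0$ for all $n$, which is~(1); and taking $n = 0$ gives $\Sha(E/\Q)[p^{\infty}] = 0$, which is~(2).

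None of this requires a new idea: the $L$-function factorisation, the structure theory of $\Lambda$-modules, and the $p$-descent sequence are all standard, and Greenberg's ``no nonzero finite $\Lambda$-submodule'' result and Mazur's control theorem are quoted off the shelf. The only points that need attention are verifying that conditions (2) and (4) of Theorem~\ref{thm: main-thm-elliptic-curves} really do supply the hypotheses of those two results --- in particular that $E(\Qcyc)[p^{\infty}] = 0$ and that a non-anomalous $p$ removes the local error term at $p$, so that the control maps are isomorphisms rather than merely having bounded kernel and cokernel. I expect this routine bookkeeping, rather than anything deeper, to be the only obstacle.
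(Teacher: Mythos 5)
The paper offers no proof of this corollary, simply labeling the two assertions ``well-known consequences'' of Theorem~\ref{thm: main-thm-elliptic-curves}, so there is no written argument to compare against. Your proof correctly supplies the standard derivation: $\mu_p(E)=\lambda_p(E)=0$ forces the characteristic ideal of $X_\infty=\mathrm{Sel}_{p^\infty}(E/\Qcyc)^\vee$ to be the unit ideal, so $X_\infty$ is pseudo-null over the two-dimensional regular local ring $\Lambda$ and hence finite; Greenberg's ``no nonzero finite $\Lambda$-submodule'' result upgrades this to $X_\infty=0$; and injectivity of the control maps $\mathrm{Sel}_{p^\infty}(E/\Q_n)\to\mathrm{Sel}_{p^\infty}(E/\Qcyc)^{\Gamma_n}$, which you correctly reduce to $E(\Qcyc)[p^\infty]=0$ via Mazur's isogeny theorem (the bound in hypothesis~(2) does force $p>163$, since $N\geq 11$ already gives $p>2453$) together with the Weil pairing, then kills $\mathrm{Sel}_{p^\infty}(E/\Q_n)$ for every $n\geq 0$. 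Both conclusions fall out of the fundamental descent sequence, as you say.

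Two small imprecisions worth flagging, neither fatal. First, the parenthetical claim that the control maps ``are in fact isomorphisms, using that $p$ is non-anomalous'' is both unnecessary (injectivity with target zero already finishes the argument) and not quite right as stated: non-anomalousness only handles the local contribution to the cokernel above $p$, and there can be further contributions from the places above the bad prime $N$. Second, in invoking Greenberg's no-finite-submodule theorem you cite hypotheses~(2) and~(4) of the theorem; the hypothesis that actually matters there is $E(\Q)[p]=0$, which is a consequence of the irreducibility of $E[p]$ coming from~(2), not of the non-anomalousness in~(4). With those cosmetic caveats, the argument is correct and is the expected one.
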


    \subsection{Examples} Let $E_1 = \texttt{11a1}$ be the elliptic curve of conductor $11$. We show in the final section of the paper that $L(E_1/\Q(\sqrt{-11}),1) \neq 0$. Theorem~\ref{thm: main-thm-elliptic-curves} says that if $p > 100 \sqrt{N} (\log N + 5) \, m_E \approx 2453.604 \dots$ is a good ordinary prime, then $\mu_p(E_1) = \lambda(E_1) = 0$. Furthermore, we have $\Sha(E/\Q)[p^{\infty}] = 0$ and the rank of $E$ over $\Q_n$ is zero for all $n \geq 1$.

    As another example, let $E_2 = \texttt{67a1}$ be the elliptic curve of conductor $67$. We show in the final section of the paper that $L(E_2/\Q(\sqrt{-67}),1) \neq 0$. If $p > 100 \sqrt{N} (\log N + 5) m_E \approx 37671.828 \dots$ is a good ordinary prime which is not anomalous for $E_2$, then $\mu_p(E_2) = \lambda_p(E_2)=0$. Furthermore, we have $\Sha(E_2/\Q)[p^{\infty}] = 0$ and the rank of $E_2$ over $\Q_n$ is zero for all $n \geq 1$.

    However, some lower bound on $p$ is necessary. Indeed, consider the elliptic curve $E_3 = \texttt{24691a1}$ and $p = 5$. From the LMFDB \cite{lmfdb}, the analytic order of $\Sha(E_3/\Q)$ is $25$. So the conclusion of Corollary~\ref{cor: sha-vanishes} is false in this case. That being said, it should definitely be possible to improve the bound $p \geq 100 \sqrt{N} (\log N + 5) \, m_E$ in Theorem~\ref{thm: main-thm-elliptic-curves}, but we have not attempted to do so here.
    
    \subsection{Strategy} Using results of Kato on the Iwasawa main conjecture, it is known that in order to prove that $\mu_p(E) = \lambda_p(E)= 0$, it suffices to show that the $p$-adic $L$-function of $E$ is a unit. One knows that if $p$ is not anomalous and that $p$ does not divide $L(E,1)/\Omega_E$, then the $p$-adic $L$-function is a unit. So as long as one avoids the finitely many prime divisors of $L(E,1)/\Omega_E$ (and the anomalous primes), then we have $\mu_p(E)=0$ and $\lambda_p(E)=0$. But a priori, one has no idea what primes divide $L(E,1)/\Omega_E$. The main point of this paper is to give an \textit{explicit upper bound} for which primes divide $L(E,1)/\Omega_E$. This is roughly the statement of Theorem~\ref{thm: main-thm-modular-forms}. We prove Theorem~\ref{thm: main-thm-modular-forms} using the Rankin-Selberg method.  

     \begin{rmk}
        In proving Theorem~\ref{thm: main-thm-elliptic-curves}, we actually show Greenberg's Conjecture both for $E$ and the quadratic twist $E^{(-N)}$ by $-N$.  
    \end{rmk}

    \subsection{Outline}

    In the first section, we explain the Iwasawa main conjecture. We then reduce Theorem~\ref{thm: main-thm-elliptic-curves} to Theorem~\ref{thm: main-thm-modular-forms}, which is purely a statement about $L$-values of modular forms. In the next section, we explain the Rankin-Selberg method. In the section after that, we prove Theorem~\ref{thm: main-thm-modular-forms}. Finally in the last section, we give numerical examples that use the Rankin-Selberg method to calculate Iwasawa invariants.

    \subsection{Acknowledgements} I would like to thank Kumar Murty and members of the GANITA lab for their constant support. I would also like to thank Debanjana Kundu for teaching me Iwasawa theory, for suggesting this problem to me, and for her mentorship. I would also like to thank Antonio Lei and Robert Pollack for providing helpful comments on a preliminary draft of this paper and for suggesting further applications. 
    
    \section{Preliminaries}
    
    In this section, $p$ will denote an odd prime and $E/\Q$ will be an elliptic curve with good ordinary reduction at $p$. Let $\Q^{\text{cyc}}$ denote the cyclotomic $\Z_p$ extension of $\Q$. 
	
	To fix notation, we briefly summarize some facts about the Iwasawa main conjecture for elliptic curves. We will first summarize the objects on the algebraic side of the main conjecture, then summarize the objects on the analytic side, and then state the Iwasawa main conjecture which bridges these two worlds.
	
	\subsection*{The algebraic side}

	For any algebraic extension $K/\Q$, the Selmer group of $E$ over $K$ is a certain subgroup of $H^1(G_K, E(\overline{\Q})_{\text{tors}})$, where $G_K = \Gal(\overline{K}/K)$.
	The Selmer group fits into the  fundamental exact sequence
	\[ 0 \to E(K) \otimes \Q/\Z \to \text{Sel}(E/K) \to \Sha(E/K) \to 0 \] 
	where $\Sha(E/K)$ denotes the Tate-Shafarevich group of $E$ over $K$. Let $K = \Q^{\text{cyc}}$. Then we can consider the Selmer group $\text{Sel}(E/\Q^{\text{cyc}})$, and this has an action of $\Gamma=\Gal(\Q^{\text{cyc}}/\Q)$. Its $p$-primary subgroup  $\text{Sel}(E/\Q^{\text{cyc}})_p$ can be regarded as a $\Lambda$-module, where $\Lambda = \Zpt$. This ring $\Lambda$ is called the \textit{Iwasawa algebra}. It is now known from the deep work of Kato \cite{kato} that the Pontryagin dual $X(E/\Q^{\text{cyc}}) = \text{Sel}(E/\Q^{\text{cyc}})_p^{\vee}$ is a finitely generated \textit{torsion} $\Lambda$-module. Therefore the structure theorem of finitely generated $\Lambda$-modules (see for example \cite[Theorem 13.12]{washington}) says that one has a pseudo-isomorphism
	\[ X(E/\Q^{\text{cyc}}) \sim \left( \oplus_{i=1}^n \Lambda/(f_i(T)^{a_i}) \right) \oplus \left( \oplus_{j=1}^m \Lambda/(p^{\mu_j}) \right),  \] 
	where the $f_i(T)$'s are irreducible distinguished polynomials\footnote{A polynomial $f(T)$ is \textit{distinguished} if when you reduce $f$ modulo $p$, only the highest degree term remains.} in $\Lambda$.  One can then define the algebraic Iwasawa invariants by 
	\[ \lambda_{E}^{\text{alg}} = \sum_{i=1}^n a_i \text{deg}(f_i(T)), \hspace{20pt}\text{and}\hspace{20pt} \mu^{\mathrm{alg}}_p(E) = \sum_{j=0}^m \mu_j. \]
	The \textit{characteristic ideal} of $X(E/\Q^{\text{cyc}})$ is the ideal of $\Lambda$ generated by $p^{\mu}f_1(T)^{a_1}\dots f_n(T)^{a_n}$. 
	
	\subsection*{The analytic side}
	For an elliptic curve $E/\Q$ with good ordinary reduction at a prime $p$ and $\chi$ an even Dirichlet character, denote by $L(E, \chi, s)$ the Hasse Weil $L$-function of $E$ twisted by $\chi$. Let $H_1(E(\C), \Z)^{\pm}$ be the eigenspaces under complex conjugation, and let $\Lambda_E^{\pm}$ be the respective generators of these spaces over $\Z$. The periods $\Omega_E^+$ and $\Omega_E^-$ are defined by 
    \[ \Omega_E^{\pm} = \int_{\Lambda_E^{\pm}} \w_E, \]
    where $\w_E$ is the invariant differential of $E$. We may assume that $\Omega_E^+$ and $i\Omega_E^-$ are real positive. It is known by the work of Shimura that $L(E, \chi, 1)/\Omega_E^{\chi(-1)}$, \textit{a priori} a transcendental number, is in fact an \textit{algebraic} number. 
	Mazur and Swinnterton-Dyer have attached to $E$ a $p$-adic $L$-function $\mathcal{L}_p(E/\Q, T) \in \Lambda \otimes \Q_p$ satisfying the following interpolation properties. If we write $a_p = (p+1) - \#E(\F_p)$, consider the Hecke polynomial $X^2 - a_pX + p$. Let $\a \in \Z_p^{\times}$ denote this unique $p$-adic unit root of the Hecke polynomial. Then,
    \begin{equation}
        \label{eqn: interpolation-property}
        \mathcal{L}_p(E/\Q, 0) = \left( 1- \dfrac{1}{\a} \right)^2 \cdot \dfrac{L(E,1)}{\Omega_E^+}.
    \end{equation}
	
	Let $\chi$ be an even Dirichlet character of conductor $p^n$ and $p$-power order. Then,
	\[ \mathcal{L}_p(E/\Q, \chi(1+p)-1) = \dfrac{1}{\a^{n+1}} \cdot \dfrac{p^{n+1}}{g(\chi^{-1})} \dfrac{L(E,\chi^{-1}, 1)}{\Omega_E^+}. \]
	Using the Weierstrauss preparation theorem \cite[Theorem 7.3]{washington}, we can define the analytic invariants $\mu^{\mathrm{an}}_p(E)$ and $\lambda_E^{an}$ by writing:
	\[  \mathcal{L}_p(E/\Q, T) = p^{\mu^{\mathrm{an}}_p(E)} \cdot u(T) \cdot f(T) \]
	where $f(T)$ is a distinguished polynomial of degree $\lambda_E^{an}$ and $u(T)$ is a unit in $\Lambda$.
	It is known (see \cite[Proposition 3.7]{stein-wuthrich}) that if $E/\Q$ has good ordinary reduction at an \textit{odd }prime $p$, and that $E[p]$ is irreducible as a Galois module, then $\mu^{\mathrm{an}}_p(E) \geq 0$.
	In other words, $\mathcal{L}_p(E/\Q, T) \in \Lambda$. 
	
	\subsection*{Iwasawa Main Conjecture}
	The Iwasawa \textit{main conjecture} relates the Selmer group on the algebraic side to the $p$-adic $L$-function on the analytic side. Precisely, suppose that $E/\Q$ is an elliptic curve with good ordinary reduction at an \textit{odd} prime $p$, and that $E[p]$ is irreducible as a Galois module. Then on the algebraic side, we can look at the  characteristic ideal of the $p$-primary Selmer group $X(E/\Q^{\text{cyc}})$; this is an ideal in $\Lambda$. On the analytic side, we can attach to $E$ a $p$-adic $L$-function  $\mathcal{L}_p(E/\Q, T) \in \Lambda$.  The \textit{main conjecture} asserts that the characteristic ideal of the $p$-primary Selmer group $X(E/\Q^{\text{cyc}})$ is generated by $\mathcal{L}_p(E/\Q, T)$ in $\Lambda$. In particular, it implies that $\mu_{E}^{\text{alg}}=\mu_{E}^{\text{an}}$ and $\lambda_{E}^{\text{alg}}=\lambda_{E}^{\text{an}}$. Kato \cite{kato} has proven a deep result on one divisibility of the main conjecture, which is the starting point of our proof.
	
	\begin{thm}[Kato]
		\label{kato}
		Let $E/\Q$ be an elliptic curve, and let $p \geq 5$ be a prime of good ordinary reduction. Suppose that the mod $p$ Galois representation $\overline{\rho_{E,p}}: G_{\Q} \to \text{GL}_2(\F_p)$ is surjective. Then the characteristic ideal of $X(E/\Q^{\text{cyc}})$ divides the ideal generated by $\mathcal{L}_p(E/\Q, T)$ in $\Lambda$. In particular, $\mu_{E}^{\text{alg}}\leq\mu_{E}^{\text{an}}$ and $\lambda_{E}^{\text{alg}}\leq \lambda_{E}^{\text{an}}$.
	\end{thm}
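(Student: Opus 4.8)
Since Theorem~\ref{kato} is a theorem of Kato, I will only outline the architecture of its proof. The plan is to construct Kato's Euler system of zeta elements attached to $E$ over the cyclotomic tower, to pin down its bottom layer in terms of $\mathcal{L}_p(E/\Q,T)$ via an explicit reciprocity law, and then to feed the Euler system into the Kolyvagin--Rubin--Perrin-Riou machinery to bound the characteristic ideal of $X(E/\Q^{\text{cyc}})$. The surjectivity of $\overline{\rho}_{E,p}$ and the hypothesis $p\geq 5$ are exactly what is needed to make the last step run.

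First I would build the Beilinson--Kato elements. Starting from Siegel units $g_\alpha,g_\beta$ on a modular curve of suitable auxiliary level, the cup product $\{g_\alpha,g_\beta\}$ lies in $K_2$ of the curve, equivalently in motivic cohomology $H^2_{\mathcal{M}}(\,\cdot\,,\Q(2))$; pushing through the \'etale regulator and projecting onto the $f_E$-isotypic part (where $f_E$ is the newform of $E$) produces classes $z_m \in H^1(\Q(\zeta_m), T_pE(1))$ indexed by squarefree auxiliary conductors $m$. One then checks that, as $m$ varies and as one climbs the cyclotomic $\Z_p$-extension, these classes satisfy the norm-compatibility relations with the expected Euler factors $P_\ell(\Frob_\ell^{-1})$ at the auxiliary primes; this part is essentially formal once the modular-units input is in place. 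The analytic heart is Kato's explicit reciprocity law: one must show that the image of the bottom class under the Bloch--Kato dual exponential (equivalently, under the Perrin-Riou/Coleman big logarithm along the tower) equals, up to explicit normalization, $\mathcal{L}_p(E/\Q,T)$. Concretely this means computing the $p$-adic regulator of the Beilinson--Kato element and matching it, through Rankin--Selberg-type integral expressions for the twisted $L$-values $L(f_E\otimes\chi,1)$, with the interpolation property~\eqref{eqn: interpolation-property}. I expect this explicit reciprocity law to be the main obstacle, since it demands delicate $p$-adic Hodge theory (syntomic/$(\varphi,\Gamma)$-module computations) and a careful comparison of archimedean and $p$-adic period integrals.

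Finally I would invoke the Euler system argument: an Euler system for $T_pE$ whose bottom class is nonzero bounds $\text{char}(X(E/\Q^{\text{cyc}}))$ from above by the ideal generated by the image of that class under the relevant Coleman map, which by the previous step is $(\mathcal{L}_p(E/\Q,T))$. Here the hypotheses enter decisively: surjectivity of $\overline{\rho}_{E,p}$ gives $T_pE$ big image, so the relevant $H^1$'s carry no exceptional torsion and the Chebotarev step producing useful Kolyvagin primes goes through, while $p\geq 5$ removes small-prime pathologies in $\F_p[\GL_2(\F_p)]$ and also ensures (together with irreducibility) that $\mathcal{L}_p(E/\Q,T)\in\Lambda$ in the first place. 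Reading off $\mu$- and $\lambda$-invariants from the resulting divisibility of characteristic ideals yields $\mu^{\text{alg}}_E \leq \mu^{\text{an}}_E$ and $\lambda^{\text{alg}}_E \leq \lambda^{\text{an}}_E$, as claimed.
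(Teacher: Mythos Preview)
Your outline is a fair high-level sketch of Kato's argument (Beilinson--Kato elements from Siegel units, the explicit reciprocity law identifying their image under the big logarithm with the $p$-adic $L$-function, and the Euler-system/Kolyvagin machinery under the big-image hypothesis), and nothing in it is obviously wrong at the level of architecture. However, there is nothing to compare it against: the paper does not prove Theorem~\ref{kato} at all. It is stated as a result of Kato, cited to \cite{kato}, and used as a black box; the paper's own contribution begins only after this theorem, in reducing Theorem~\ref{thm: main-thm-elliptic-curves} to Theorem~\ref{thm: main-thm-modular-forms}. So your proposal is not an alternative to the paper's proof but rather a summary of the cited reference, and for the purposes of this paper a one-line citation is all that is expected.
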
 
	
    \subsection{Modular Forms}
    For the rest of the paper, fix an embedding $\overline{\Q} \hookrightarrow \overline{\Q_p}$. Kato's Theorem shows that to prove $\mu^{\text{alg}}_p(E) = \lambda^{\text{alg}}_p(E) = 0$, it suffices to show that $\mu^{\text{an}}_p(E) = \lambda^{\text{an}}_p(E) = 0$. This is equivalent to showing that the constant term of $\mathcal{L}_p(E/\Q, T)$ is nonzero mod $p$; by (\ref{eqn: interpolation-property}), this means
    \[ \left( 1- \dfrac{1}{\a} \right)^2 \cdot \dfrac{L(E,1)}{\Omega_E^+} \not\equiv 0 \mod p. \]
    (Note that we are viewing $L(E,1)/\Omega_E^+$ as an element of $\overline{\Q_p}$ via our fixed emebedding $\overline{\Q} \hookrightarrow \overline{\Q_p}$.) This is really a problem about modular forms, not elliptic curves. To see this, let $f \in S_2(\Gamma_0(N))$ be the newform associated to $E$ by modularity. Then we have an equality of $L$-functions: $L(E,s) = L(f,s)$. Define the Petersson norm of $f$ as follows:
    \[ \langle f ,f \rangle = \int_{ \Gamma_0(N) \setminus \H} |f|^2 dx \, dy.  \]
    An elliptic curve $E/\Q$ is \textit{optimal} if the map $H_1(X_0(N)(\C), \Z) \to H_1(E(\C), \Z)$ is surjective. (See \cite{stevens-inventiones}). We will derive Theorem~\ref{thm: main-thm-elliptic-curves} from: 
    
	\begin{thm}
        \label{thm: main-thm-modular-forms}
		Let $N$ be a prime and let $f \in S_2(\Gamma_0(N))$ be a weight $2$ newform with rational Fourier coefficients. Let $p$ be an odd prime. Assume that 
        \begin{enumerate}
            \item $N$ is congruent to $3$ modulo $4$, 
			\item $L(f, 1)L(f,\chi,1) \neq 0$, where $\chi$ is the unique quadratic character mod $N$, and 
			\item $p > 100 \sqrt{N} (\log N + 5) \deg \pi_E$, where $E$ is an optimal elliptic curve in the isogeny class corresponding to $f$. 
			 
		\end{enumerate} 
		Then $p$ does not divide $\dfrac{L(f,1) \, L(f, \chi, 1)}{\pi^2 \langle f,f \rangle} \, \dfrac{g(\chi)}{i} $.
	\end{thm}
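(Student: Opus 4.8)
Write $Q$ for the quantity in the statement. The plan is to use the Rankin--Selberg method to express $Q=r\cdot c_f(\Phi)$, where $r\in\Q^\times$ is explicit with numerator and denominator bounded polynomially in $N$, $\Phi$ is an explicit weight two modular form, and $c_f(\Phi)$ is the coefficient of $f$ in the cuspidal projection of $\Phi$; one then checks that $Q$ is a $p$-adic integer and that $|Q|$ is small, and concludes from the lower bound on $p$. Since $N\equiv 3\pmod 4$, the character $\chi$ is odd and primitive of conductor $N$, $K:=\Q(\sqrt{-N})$ has discriminant $-N$, and $g(\chi)=i\sqrt N$; hence $Q=\sqrt N\,L(f,1)L(f,\chi,1)/(\pi^2\langle f,f\rangle)$, which is a rational number (Shimura's algebraicity theorem for $L(f,1)$ and $L(f,\chi,1)$, together with the modular--degree formula, which expresses $\pi^2\langle f,f\rangle$ as a fixed rational multiple of $\deg(\pi_E)\,c_E^2\,|\Omega_E^+\Omega_E^-|$).

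\emph{Step 1 (Rankin--Selberg).} Let $E_{1,\chi}$ be the holomorphic weight one Eisenstein series on $\Gamma_0(N)$ with nebentypus $\chi$, with $q$-expansion $\tfrac12 L(0,\chi)+\sum_{n\ge 1}\sigma_\chi(n)q^n$ where $\sigma_\chi(n)=\sum_{d\mid n}\chi(d)$; its Mellin transform is $\zeta(s)L(\chi,s)$. Multiplicativity of $a_n(f)$ and of $\sigma_\chi(n)$ gives, Euler factor by Euler factor (including at $N$, using $a_N(f)=\pm 1$ and that $f\otimes\chi$ is ramified at $N$), the exact identity
\[ D(s):=\sum_{n\ge 1}\frac{a_n(f)\,\sigma_\chi(n)}{n^s}=\frac{L(f,s)\,L(f,\chi,s)}{L(\chi,2s-1)}. \]
Unfolding $f\cdot\overline{E_{1,\chi}}$ against a weight one real--analytic Eisenstein series of nebentypus $\chi$, and specializing to the value of $s$ at which that series becomes holomorphic, produces an identity
\[ \langle f,\Phi\rangle=C(N)\cdot D(1)=C(N)\cdot\frac{L(f,1)\,L(f,\chi,1)}{L(\chi,1)}, \]
where $C(N)$ is an explicit monomial in $\pi$, $N$ and small rationals (from the $\Gamma$-factor at $s=1$ and the volume of $\Gamma_0(N)\backslash\H$) and $\Phi$ is a product of weight one Eisenstein series of nebentypus $\chi$; in particular $\Phi\in M_2(\Gamma_0(N))$ with $q$-expansion coefficients in $\Z[\tfrac12]$, since $\tfrac12 L(0,\chi)=\tfrac12 h_K\in\tfrac12\Z$ for $N>3$ (the case $N=3$, where the lower bound on $p$ is enormous, is handled separately). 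Feeding in Dirichlet's class number formula $L(\chi,1)=\pi h_K/\sqrt N$, all powers of $\pi$ cancel and we obtain $Q=r\cdot c_f(\Phi)$ with $c_f(\Phi)=\langle f,\Phi\rangle/\langle f,f\rangle$, where $r\in\Q^\times$ is explicit and polynomially bounded in $N$ (its denominator a fixed integer coming from the Rankin--Selberg constant).

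\emph{Step 2 ($p$-integrality).} The lower bound on $p$ forces $p\ge 11$ and $p$ prime to $N$ and to every fixed polynomial in $N$ occurring below; in particular $r\in\Z_{(p)}$. Since $N$ is prime, $E$ is semistable, so $\bar\rho_{E,p}$ is surjective by Mazur's theorem; equivalently $p\nmid N-1$, so $f$ is not congruent modulo $p$ to the weight two Eisenstein series on $\Gamma_0(N)$. Consequently the functional $c_f$ on $M_2(\Gamma_0(N);\Z_{(p)})$ — which kills Eisenstein forms and sends $f$ to $1$ — has denominator dividing the congruence number of $f$, which at prime level equals $\deg(\pi_E)$ (Ribet); as $p>\deg(\pi_E)$, we get $c_f(\Phi)\in\Z_{(p)}$ and hence $Q=r\,c_f(\Phi)\in\Z_{(p)}$.

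\emph{Step 3 (archimedean bound and conclusion).} The point $s=1$ lies at the edge of absolute convergence of $D(s)$, whose analytic conductor is $\asymp N^2$; by the approximate functional equation $D(1)$ is, up to a bounded factor, $\sum_{n\le N}a_n(f)\sigma_\chi(n)/n$, which by Deligne's bound $|a_n(f)|\le d(n)\sqrt n$ and $|\sigma_\chi(n)|\le d(n)$ is $\ll\sqrt N(\log N)^3$. With the class number formula and the lower bound $\langle f,f\rangle\gg N^{1-\varepsilon}$ (Rankin--Selberg plus a lower bound for $L(\mathrm{Sym}^2 f,2)$ of Hoffstein--Lockhart type, or a weaker elementary estimate, which suffices) this yields $|Q|\ll(\log N)^{O(1)}$, hence $|Q|<100\sqrt N(\log N+5)$ with ample room (the constant $100$ and the shift $+5$ absorbing the implied constants and small $N$). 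Since $Q\in\Z_{(p)}$ and $Q\ne 0$ (hypothesis (2) and the identity above), it suffices to see that the numerator of $Q$ in lowest terms has absolute value $<p$; but the denominator of $Q$ divides $\deg(\pi_E)$ times a fixed integer coprime to $p$, so that numerator is at most $(\mathrm{const})\cdot\deg(\pi_E)\,|Q|<100\sqrt N(\log N+5)\deg(\pi_E)<p$. Therefore $p\nmid Q$. The main obstacle is Step 1 — making the Rankin--Selberg identity fully explicit, in particular the special-value computation for the weight one (real--analytic) Eisenstein series and the exact value of $C(N)$ including the Euler factor at $N$ — together with the arithmetic bookkeeping in Step 3 needed to pin down the explicit bound $100\sqrt N(\log N+5)$; by contrast Step 2 is routine once one invokes Ribet's identification of the congruence number with the modular degree at prime level.
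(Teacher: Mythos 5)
The proposal has the right general shape for Step 1 (Rankin--Selberg), and Step 2 is a valid but different route: you appeal to the congruence number and Ribet's theorem identifying it with the modular degree at prime level, whereas the paper obtains the $p$-integrality bound via modular symbols, Stevens' theorem that the images of cusps on a semistable optimal curve are rational torsion points, Mazur's classification of torsion (which caps the denominator by $m\le 12$), and the semistable case of the Manin constant conjecture. Either way one gets a denominator controlled by $\deg\pi_E$, so Step 2 is fine.

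Step 3, however, diverges sharply from the paper and contains a genuine gap. The paper's key move — which you miss entirely — is to compare the \emph{first} Fourier coefficient of $G_{\mathbf 1,\chi}^2$ against its spectral decomposition $cE+\sum_j Q_j f_j$, obtaining the identity $\frac{L(0,\chi)-c}{\sqrt N}=\sum_j \frac{L(f_j,1)L(f_j,\chi,1)}{\pi^2\langle f_j,f_j\rangle}$; then bound the whole right-hand side by the fully explicit estimate of Ramar\'e on $L(1,\chi)$, giving $\frac{1}{2\pi}(\log N+5)$; and finally invoke Guo's theorem that each $L(f_j,1)\ge0$ and $L(f_j,\chi,1)\ge0$ to extract the single term of interest from the sum. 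This sidesteps any need to estimate $\langle f,f\rangle$ or $L(f,1)L(f,\chi,1)$ individually. Your approach — an approximate functional equation for $D(1)$ plus a Hoffstein--Lockhart-type lower bound for $\langle f,f\rangle$ — would require making the implied constants fully effective, and the Hoffstein--Lockhart bound (effective or not) carries an $N^\varepsilon$ loss and a constant that is nowhere near as clean as what is needed to land under $100\sqrt N(\log N+5)$; you acknowledge this bookkeeping is unfinished. Without positivity, nothing lets you isolate one summand from the full spectral sum, so the explicitness you need cannot be salvaged along your route without doing that hard analytic work. The central idea to recover is: Fourier-coefficient comparison plus Guo's nonnegativity.
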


    \begin{proof}[Proof of Theorem~\ref{thm: main-thm-modular-forms} $\implies$ Theorem~\ref{thm: main-thm-elliptic-curves}]
        Let $E/\Q$ be an elliptic curve of conductor $N$ as in Theorem~\ref{thm: main-thm-elliptic-curves}. There exists a modular parametrization $\pi_E: X_0(N) \to E$ that is a surjective morphism over $\Q$. Let $c_E$ be the Manin constant of $E$, that is, the number such that 
	\[ \pi_E^*(\w_E) = c_E \cdot 2\pi i f(z) dz,\]
	where $\w_E$ is the invariant differential on $E$ and $f \in S_2(\Gamma_0(N))$ is the modular form attached to $E$. Let $m_E = \deg \pi_E$ denote the modular degree of $E$. We have 
	\begin{equation}
 \label{eqn: petersson-period}
		\Omega_E^+ \Omega_E^- 
		= \int_{E(\C)} \w_E \wedge  \overline{\w_E} \\
		= \dfrac{4\pi^2 c_E^2}{m_E} \langle f,f \rangle,
	\end{equation}
    where $\Omega_E^{\pm}$ are the real and imaginary Neron periods of $E$. Recall that we have assumed in Theorem~\ref{thm: main-thm-elliptic-curves} that $p$ does not divide $m_E$ or $c_E$, and that $p > 100 \sqrt{N} (\log N + 5) m_E$. Therefore, the assumptions of Theorem~\ref{thm: main-thm-modular-forms} are satisfied. We conclude that $p$ does not divide 
    \[ \dfrac{L(E,1)}{\Omega_E^+}  \cdot \dfrac{L(E, \chi, 1)}{\Omega_E^-}\,  \, \dfrac{g(\chi)}{i}. \]
    It follows that $p$ does not divide $L(E,1)/\Omega^+_E$. By the interpolation property (\ref{eqn: interpolation-property}) of the $p$-adic $L$-function of $E$, if $p$ is not anomalous for $E$, we conclude that the constant term of $\mathcal{L}_p(E,T)$ is a $p$-adic unit and hence the analytic $\mu$ and $\lambda$-invariants of $E$ vanish. Kato's Theorem implies that the algebraic $\mu$ and $\lambda$-invariants of $E$ vanish as well. This proves Theorem~\ref{thm: main-thm-elliptic-curves}.
    \end{proof}    

    \begin{rmk}
        Theorem~\ref{thm: main-thm-modular-forms} works equally well in the supersingular case and the ordinary case. In the supersingular case, Kurihara \cite{kurihara} has proven an explicit growth formula for the Tate Shafarevich group in the cyclotomic tower, assuming that $p$ does not divide $L(E,1)/\Omega_E$. Theorem~\ref{thm: main-thm-modular-forms} gives an explicit upper bound for which primes divide $L(E,1)/\Omega_E$, which in turn gives an upper bound for primes $p$ for which where Kurihara's theorem applies. (We thank Robert Pollack for pointing out this application to us.)
    \end{rmk}
	
	\section{Rankin-Selberg Method}
	
    The rest of the paper is devoted to proving Theorem~\ref{thm: main-thm-modular-forms}. The proof of Theorem~\ref{thm: main-thm-modular-forms} was catalyzed by reading Shimura's \cite{shimura} paper on the Rankin-Selberg method, which we now explain.
    
    Let $N \geq 1$ be an integer and let $\w$ be an odd Dirichlet character modulo $N$. Put
	\[ G_{\w}(\tau,s) = \dfrac{N}{-4\pi i g(\w)}\sum_{(m,n) \in \Z^2 \setminus (0,0)} \dfrac{\w(n)}{(mN\tau + n) |mN\tau + n|^{2s}}, \]
    where $g(\w)$ is the Gauss sum. This function has a meromorphic continuation to all $s \in \C$ (see \cite[Page 788]{shimura} and the references therein) and it is holomorphic at $s=0$. Thus we can set 
	\[ G_{\w}(\tau) \coloneqq G_{\w}(\tau,0). \] 
	By \cite[Page 788]{shimura}, the function $G_{\w}(\tau)$ belongs to $M_1(\Gamma_0(N), \w)$. If $\w$ is \textit{primitive}, by \cite[Equation (3.4)]{shimura}, we have the Fourier expansion\footnote{Shimura expresses the constant term of $G_{\w}(\tau)$ in terms of the value $L(1,\w)$, whereas we have applied the functional equation for $L(s,\w)$ to express the constant term in terms of the value $L(0,\w)$.}
	\begin{equation}
		\label{eqn: G_chipsi-q-expansion}
		G_{\w}(\tau) = \dfrac{L(0, \omega)}{2} + \sum_{n=1}^{\infty} \left( \sum_{d \vert n} \w(d) \right) q^n. 
	\end{equation}
	
	Let us now fix an element $f \in S_2(\Gamma_0(N))$ and an element $g \in M_1(\Gamma_0(N), \w)$, where $\w$ is a Dirichlet character modulo $N$. Suppose that they have Fourier expansions
	\[ f(\tau) = \sum_{n=1}^{\infty} a_n q^n, \hspace{20pt} g(\tau) = \sum_{n=0}^{\infty} b_n q^n.  \]
	Then we put 
	\[ L(s,f \times g) \coloneqq L(2s-1, \omega) \sum_{n=1}^{\infty} \dfrac{a_n b_n}{n^s}, \]
	where $L(s,\w)$ is the usual Dirichlet $L$-function attached to $\w$. The $L$-function $L(s,f \times g)$ is called the \textit{Rankin-Selberg convolution $L$-function}\footnote{Shimura uses the notation $D(s,f,g)$ instead of the now more standard $L(s, f\times g)$.} of $f$ and $g$. For an integer $M \geq 1$ and modular forms $g \in S_k(\Gamma_0(M))$ and $h \in M_k(\Gamma_0(M))$, define the Petersson inner product as 
	\[ \langle g,h \rangle \coloneqq \int_{\Gamma_0(M) \setminus \H} \overline{g(\tau)} h(\tau) y^{k-2} \, dx \, dy. \]
	
	Then by \cite[Equation (2.4)]{shimura}, we have 
	\begin{equation}
		\label{eqn1}
		\langle f, g \cdot G_{\overline{\w}} \rangle = L(1,f \times g) \dfrac{g(\overline{\w})}{8\pi^2 i},
	\end{equation}
	where $g(\overline{\w})$ is the Gauss sum.  To determine the form $g$, we need the following
	
	\begin{prop}
		\label{prop: G_chi_psi-q-expansion}
		Let $\chi, \psi$ be primitive Dirichlet characters modulo $M_1$ and $M_2$, respectively, such that $\chi\psi(-1)=-1$. There exists an element $G_{\chi, \psi} = \sum_{n=0}^{\infty} a_n q^n \in \E_1(\Gamma_0(M_1M_2), \chi\psi)$(= space of weight $1$ Eisenstein series for $\Gamma_0(M_1M_2)$ and nebentypus $\chi\psi$) such that 
		\[ a_n = \sum_{d \vert n} \chi(d) \psi(n/d), \hspace{20pt}n \geq 1  \]
		and 
		\[ a_0 = 
		\begin{cases}
			0 &\text{ if both } \chi, \psi \text{ are non-trivial,}\\
			\dfrac{L(0, \chi\psi)}{2} &\text{ otherwise}.
		\end{cases}
		\]
	\end{prop}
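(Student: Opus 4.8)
The plan is to recognize $G_{\chi,\psi}$ as the weight-one Eisenstein series attached to the pair $(\chi,\psi)$ and to produce it by Hecke's analytic-continuation trick, in exactly the style by which $G_\w$ was obtained above. For $\mathrm{Re}(s)\gg0$ one forms the non-holomorphic Eisenstein series
\[
E_{\chi,\psi}(\tau,s)\;=\;c_{\chi,\psi}\sum_{(m,n)\neq(0,0)}\frac{\chi(m)\,\overline{\psi}(n)}{(mM_2\tau+n)\,|mM_2\tau+n|^{2s}},
\]
where $c_{\chi,\psi}$ is a normalizing constant built from $M_1$, $M_2$ and the Gauss sum $g(\psi)$, chosen — together with the precise placement of the two moduli — so that the non-constant Fourier coefficients of the value at $s=0$ come out exactly as stated; compare the factor $\tfrac{N}{-4\pi i\,g(\w)}$ appearing in Shimura's $G_\w$. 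A change of summation variables using the primitivity of $\chi$ and $\psi$ then gives the slash-identity $E_{\chi,\psi}(\gamma\tau,s)=(\chi\psi)(d)\,(c\tau+d)\,E_{\chi,\psi}(\tau,s)$ for every $\gamma\in\Gamma_0(M_1M_2)$ with lower row $(c,d)$, so that once holomorphy at $s=0$ is established, $G_{\chi,\psi}\coloneqq E_{\chi,\psi}(\,\cdot\,,0)$ lies in $M_1(\Gamma_0(M_1M_2),\chi\psi)$.

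The central step is the meromorphic continuation of $E_{\chi,\psi}(\tau,s)$ to all of $\C$ together with its holomorphy at $s=0$ and — crucially — the fact that $\tau\mapsto E_{\chi,\psi}(\tau,0)$ is a genuinely holomorphic function of $\tau$. This is the only place the hypothesis $\chi\psi(-1)=-1$ is used: since the weight one is odd and the parity of $\chi\psi$ matches, the archimedean $\Gamma$-factor that would otherwise force a pole at $s=0$ is regular there, and the confluent-hypergeometric (Whittaker) factor that would otherwise carry a $\overline{\tau}$-dependence in the value degenerates to a constant. Concretely I would compute the Fourier expansion at general $s$, splitting off the $m=0$ row and using the $M_2$-periodicity of $\overline{\psi}$ to evaluate the inner sum over $n$ by Poisson summation (the Lipschitz formula in the limit $s\to0$); on letting $s\to0$, the rows with $m\neq0$ collapse — after collecting terms by $n$ and invoking $g(\psi)\,\overline{g(\psi)}=M_2$ — to the holomorphic $q$-series $\sum_{n\geq1}\big(\sum_{d\mid n}\chi(d)\psi(n/d)\big)q^n$. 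The same expansion, performed after conjugating by coset representatives for the various cusps, shows $G_{\chi,\psi}$ is holomorphic at every cusp, so $G_{\chi,\psi}\in\E_1(\Gamma_0(M_1M_2),\chi\psi)$ with the asserted non-constant coefficients.

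It remains to identify the constant term, which is the delicate bookkeeping. Using the symmetry $G_{\chi,\psi}=G_{\psi,\chi}$ — evident from the symmetry of $\sum_{d\mid n}\chi(d)\psi(n/d)$ in $\chi,\psi$ together with the fact that a nonzero constant cannot be a weight-one form on $\Gamma_0(M_1M_2)$ with nebentypus $\chi\psi$ — one may extract the constant Fourier coefficient in whichever presentation places a trivial character, if there is one, in the coordinate slot where the value at $0$ is evaluated. If both $\chi$ and $\psi$ are nontrivial, that value is $0$ and the residual contribution of the rows $m\neq0$ to the $n=0$ Fourier mode also vanishes by the parity computation above, so $a_0=0$. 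If exactly one of them is trivial — both cannot be, since $\chi\psi(-1)=-1$ — the surviving contribution equals, after the normalization, $\tfrac12$ times a Dirichlet $L$-value of $\chi\psi$ at an argument that the functional equation converts into $L(0,\chi\psi)$, which is precisely the manipulation recorded in the footnote to \eqref{eqn: G_chipsi-q-expansion}; hence $a_0=L(0,\chi\psi)/2$. The main obstacle is the analytic-continuation-and-parity step together with tracking the normalizing constant consistently through the Fourier expansion and the functional equation of the Dirichlet $L$-function; the computations themselves are classical, and indeed the whole proposition is subsumed by Hecke's theory of Eisenstein series (see e.g.\ Miyake, \emph{Modular Forms}, Section 7.1, or Diamond--Shurman, Section 4.8), which one may alternatively simply invoke.
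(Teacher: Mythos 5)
The paper's own ``proof'' of this proposition is a one-line citation to Miyake's Theorem 4.7.1, which is precisely the classical Hecke construction you sketch, and you yourself note at the end that one may simply invoke Miyake or Diamond--Shurman directly --- so the two coincide in approach. Your outline is correct, with the minor caveats that the normalizing constant $c_{\chi,\psi}$ and placement of $M_1,M_2$ are left implicit, and that holomorphy at the cusps only places $G_{\chi,\psi}$ in $M_1(\Gamma_0(M_1M_2),\chi\psi)$ while an additional (unfolding) argument is needed to locate it in the Eisenstein subspace $\E_1$, both of which are details carried out in the cited reference.
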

	
	\begin{proof}
		See for example \cite[Theorem 4.7.1]{miyake}.
	\end{proof}
	
	Now suppose that $f \in S_2(\Gamma_0(N))$ is a newform. Let $\chi, \psi$ be Dirichlet characters such that $\chi\psi(-1)=-1$. By \cite[Equation (4.3)]{shimura}, we have 
	\begin{equation}
		\label{eqn2}
		L(1, f \times G_{\chi, \psi}) = L(f, \chi, 1) L(f, \psi, 1),
	\end{equation}  
	where $G_{\chi, \psi}$ is the function from Proposition~\ref{prop: G_chi_psi-q-expansion} above. Combining (\ref{eqn1}) and (\ref{eqn2}) and dividing through by $\langle f,f \rangle$, we obtain the main result of this section:
	
	\begin{prop}
		\label{prop: shimura-identity}
		Let $N \geq 1$ be an integer and let $f \in S_2(\Gamma_0(N))$ be a newform. Let $\chi, \psi$ be primitive Dirichlet characters such that $\chi\psi(-1)=-1$. Then:
		\[\dfrac{\langle f, G_{\chi, \psi} \cdot G_{\overline{\chi\psi}} \rangle}{\langle f,f \rangle} = \dfrac{L(f, \chi, 1) L(f, \psi, 1)}{\langle f,f \rangle} \cdot \dfrac{g(\overline{\chi\psi})}{8\pi^2i},    \]
    where $g(\overline{\chi\psi})$ denotes the Gauss sum.
	\end{prop}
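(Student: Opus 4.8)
The plan is to obtain Proposition~\ref{prop: shimura-identity} as a direct consequence of the two Rankin--Selberg identities (\ref{eqn1}) and (\ref{eqn2}) recorded above, by specializing the auxiliary form $g$ in (\ref{eqn1}) to the weight-one Eisenstein series $G_{\chi,\psi}$ furnished by Proposition~\ref{prop: G_chi_psi-q-expansion}. No new analytic input is needed: the argument is a substitution, and the only care required is in matching up the hypotheses of the two cited identities.

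Concretely, I would proceed as follows. First, since $\chi\psi(-1) = -1$, the product character $\chi\psi$ is odd, so Proposition~\ref{prop: G_chi_psi-q-expansion} applies and produces $G_{\chi,\psi} \in \E_1(\Gamma_0(M_1 M_2), \chi\psi) \subseteq M_1(\Gamma_0(N), \chi\psi)$, viewed at the common level $N$ on which the Petersson pairings are computed. Second, apply (\ref{eqn1}) with $g = G_{\chi,\psi}$ and nebentypus $\chi\psi$; this gives
\[ \langle f, G_{\chi,\psi} \cdot G_{\overline{\chi\psi}} \rangle = L(1, f \times G_{\chi,\psi}) \cdot \frac{g(\overline{\chi\psi})}{8\pi^2 i}. \]
Third, since $f$ is a newform in $S_2(\Gamma_0(N))$ and $\chi\psi(-1) = -1$, apply (\ref{eqn2}) to rewrite $L(1, f \times G_{\chi,\psi}) = L(f,\chi,1)\, L(f,\psi,1)$. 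Fourth, substitute this into the previous display and divide both sides by $\langle f, f \rangle \neq 0$; the result is exactly the asserted identity.

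The step I expect to demand the most attention is bookkeeping rather than a genuine mathematical obstacle: one must ensure that the level at which the inner products $\langle f, G_{\chi,\psi} \cdot G_{\overline{\chi\psi}} \rangle$ and $\langle f, f \rangle$ are evaluated is chosen consistently (the normalizing index then cancels in the ratio, which is why the statement comes out clean), that the factor $G_{\overline{\chi\psi}}$ on the left-hand side is indeed the weight-one Eisenstein series of nebentypus $\overline{\chi\psi}$ used in (\ref{eqn1}) --- here one should note that (\ref{eqn1}) only uses the nebentypus of $g$ and does \emph{not} require $\chi\psi$ itself to be primitive --- and that the Gauss sum $g(\overline{\chi\psi})$ is normalized identically in (\ref{eqn1}) and in the final statement. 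Once these compatibilities are checked against the conventions of \cite{shimura}, the proof is complete.
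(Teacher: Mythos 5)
Your proposal matches the paper's own derivation exactly: the paper states Proposition~\ref{prop: shimura-identity} as the result of "combining (\ref{eqn1}) and (\ref{eqn2}) and dividing through by $\langle f, f\rangle$," which is precisely the substitution $g = G_{\chi,\psi}$ into (\ref{eqn1}) followed by (\ref{eqn2}), as you describe. Your remarks on level bookkeeping and the oddness of $\chi\psi$ are the same compatibility checks the paper implicitly relies on.
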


	\section{Proof of Theorem~\ref{thm: main-thm-modular-forms}}
	
	We first need two preliminary results.
		\begin{prop}
		\label{prop: L(0,chi)-inequality}
		Let $N$ be an odd integer. Let $\chi$ be an odd primitive quadratic character of conductor $N$. Then 
		\[ L(0, \chi) \leq \frac{\sqrt{N}}{2 \pi} ( \log N + 5). \]
	\end{prop}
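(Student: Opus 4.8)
The plan is to convert the bound on $L(0,\chi)$ into a bound on an incomplete character sum, and then feed in a sharp, parity-sensitive form of the P\'olya--Vinogradov inequality.

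Since $\chi$ is odd, real and primitive of conductor $N$, the hypotheses force $N\equiv 3\pmod{4}$ and (by Gauss) the Gauss sum $g(\chi)=i\sqrt N$, so the completed $L$-function $(N/\pi)^{(s+1)/2}\Gamma(\tfrac{s+1}{2})L(s,\chi)$ is self-dual; comparing its values at $s=0$ and $s=1$ gives
\[ L(0,\chi)=\frac{\sqrt N}{\pi}\,L(1,\chi). \]
As $\chi$ is odd one has $L(1,\chi)>0$ (Dirichlet's class number formula; in fact $L(0,\chi)=h(-N)$ when $N$ is prime), so the assertion is equivalent to $L(1,\chi)\le\tfrac12(\log N+5)$. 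Equivalently, I would use $L(0,\chi)=-B_{1,\chi}=-\tfrac1N\sum_{a=1}^{N-1}a\chi(a)$ and Abel summation, together with $\sum_{a=1}^{N-1}\chi(a)=0$, to obtain the clean identity
\[ L(0,\chi)=\frac1N\sum_{a=1}^{N-2}S_\chi(a),\qquad S_\chi(a):=\sum_{n=1}^{a}\chi(n), \]
whence $0<L(0,\chi)\le\frac{N-2}{N}\max_{1\le a\le N-2}|S_\chi(a)|<\max_a|S_\chi(a)|$.

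The crux is now to bound $\max_a|S_\chi(a)|$ (equivalently $L(1,\chi)$). The classical P\'olya--Vinogradov bound $\max_a|S_\chi(a)|<\sqrt N\log N$ is too weak here by a factor of roughly $2\pi$, so I would invoke the refinement available for an \emph{odd} primitive character. Starting from the finite Fourier expansion $S_\chi(a)=\tfrac{1}{g(\chi)}\sum_{m=1}^{N-1}\chi(m)\sum_{n\le a}e^{2\pi i mn/N}$ and using $\chi(-1)=-1$ to pair the frequencies $m\leftrightarrow N-m$, each inner exponential sum gets replaced by $2i\operatorname{Im}\sum_{n\le a}e^{2\pi i mn/N}=2i\,\frac{\sin((a+1)\pi m/N)\,\sin(a\pi m/N)}{\sin(\pi m/N)}$ and the frequency range folds onto $1\le m\le(N-1)/2$; combined with $\sum_{m=1}^{(N-1)/2}\csc(\pi m/N)=\tfrac{N}{\pi}\log N+O(N)$ and a non-trivial treatment of the sine product, this yields $\max_a|S_\chi(a)|\le\tfrac{1}{2\pi}\sqrt N\log N+C\sqrt N$ with an explicit constant $C$, which one checks satisfies $C\le\tfrac{5}{2\pi}$. (I would cite this odd-character P\'olya--Vinogradov refinement — or the equivalent explicit bound $L(1,\chi)\le\tfrac12\log N+c$ for odd primitive $\chi$ — from the literature rather than reprove it.) Plugging in gives $L(0,\chi)<\tfrac{1}{2\pi}\sqrt N\log N+C\sqrt N\le\tfrac{\sqrt N}{2\pi}(\log N+5)$.

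The main obstacle is exactly this last step: producing the coefficient $\tfrac12$ — not $1$ — in front of $\log N$. Any argument that simply splits $\sum_a S_\chi(a)$ (or $\sum_n\chi(n)/n$) into a short range handled trivially and a tail handled by the classical P\'olya--Vinogradov yields only $\tfrac12\log N+O(\log\log N)$, or $\tfrac32\log N+O(1)$, never $\tfrac12\log N+O(1)$; hence the parity-sensitive refinement (or an already-explicit bound on $L(1,\chi)$) is genuinely needed. The remaining ingredients — the functional equation, the identity $L(0,\chi)=-B_{1,\chi}$, Abel summation, and Gauss's sign $g(\chi)=i\sqrt N$ — are all routine.
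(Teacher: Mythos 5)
Your proposal matches the paper's proof in essence: both reduce via the functional equation $L(0,\chi) = \frac{\sqrt N}{\pi}L(1,\chi)$ to showing $L(1,\chi)\le\tfrac12(\log N+5)$ for odd primitive $\chi$, and both then appeal to an explicit bound from the literature (the paper cites Ramar\'e's estimate $\bigl|(1-\chi(2)/2)L(1,\chi)\bigr|\le\tfrac14(\log N+5)$, whose worst case $\chi(2)=1$ gives precisely the needed $\tfrac12(\log N+5)$). Your parallel sketch via the smoothed Abel sum and a parity-sensitive P\'olya--Vinogradov is an equivalent formulation of the same explicit input, and you correctly identify that the entire content is in obtaining the leading coefficient $\tfrac12$ rather than $1$.
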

	
	\begin{proof}
		In \cite[Corollary 2]{ramare}, Ramar{\'e} proves the following bound for odd primitive characters $\chi$ modulo odd $N$:
        \begin{equation}
            \label{eqn: ramare}
            |(1 - \chi(2)/2) L(1,\chi)| \leq \frac{1}{4} ( \log N + 5). 
        \end{equation}
        (Note that Ramar{\'e} does not require $\chi$ to be quadratic.) The functional equation for $L(s, \chi)$ gives
        \[ L(0, \chi) = \dfrac{g(\chi)}{i \pi} L(1, \chi). \]
        Since $|g(\chi)| = \sqrt{N}$ for $\chi$ quadratic, (\ref{eqn: ramare}) gives us 
        \[ L(0, \chi) \leq \frac{\sqrt{N}}{2 \pi} ( \log N + 5 - 2 \log (3/2)) \leq \frac{\sqrt{N}}{2 \pi} ( \log N + 5),   \]
        which completes the proof.        
	\end{proof}

    Next we show that algebraic $L$-values have bounded denominators. An elliptic curve $E/\Q$ is \textit{optimal} if the map $H_1(X_0(N)(\C), \Z) \to H_1(E(\C), \Z)$ is surjective.  
    
    \begin{prop}
        \label{prop: L-value-denominators}
        Let $N$ be a prime which is $3$ mod $4$. Let $f \in S_2(\Gamma_0(N))$ be a newform and let $\chi$ be an odd quadratic character mod $N$. Assume that $L(f,1) \, L(f, \chi, 1) \neq 0$. Then the rational number  
        \begin{equation}
            \label{eqn: L-value-integrality}
            \dfrac{L(f,1) \, L(f, \chi, 1)}{\pi^2 \langle f_j,f_j \rangle} \, \dfrac{g(\chi)}{i}
        \end{equation}
        has denominator $\leq 576 m_E$, where $E$ is an optimal curve in the isogeny class of $f$ and $m_E$ denotes the modular degree of $E$. 
    \end{prop}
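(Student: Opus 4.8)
The plan is to express the rational number $T$ in (\ref{eqn: L-value-integrality}) as an elementary multiple of the product of the two classical algebraic $L$-values $L(f,1)/\Omega_E^{+}$ and $g(\chi)\,L(f,\chi,1)/\Omega_E^{-}$, and then to bound the denominator of each by a modular symbol computation. Let $E$ be an optimal curve in the isogeny class of $f$, with invariant Neron differential $\omega_E$, modular parametrization $\pi_E:X_0(N)\to E$ normalized by $\pi_E^{*}\omega_E=c_E\cdot 2\pi i\,f(z)\,dz$, and period lattice $\Lambda_E\subset\C$, so that $E(\C)=\C/\Lambda_E$, the period $\Omega_E^{+}$ generates $\Lambda_E\cap\R$, and $\Omega_E^{-}$ generates $\Lambda_E\cap i\R$. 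Using (\ref{eqn: petersson-period}) to eliminate $\langle f,f\rangle$, and Shimura's rationality theorem (cited earlier in the paper; since $f$ has rational coefficients and $\chi$ is quadratic the relevant algebraic numbers lie in $\Q$) for $L(f,1)/\Omega_E^{+}$ and for $g(\chi)\,L(f,\chi,1)/\Omega_E^{-}$, one obtains
\[ T\;=\;\pm\,\frac{4\,c_E^{2}}{m_E}\cdot\frac{L(f,1)}{\Omega_E^{+}}\cdot\frac{g(\chi)\,L(f,\chi,1)}{\Omega_E^{-}}. \]
Thanks to the $c_E^{2}$ in the numerator, the Manin constant will cancel the copies of $c_E$ that enter the two denominators below, and so will be absent from the final bound; it therefore suffices to bound the denominators of the two ratios on the right.

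For $L(f,1)/\Omega_E^{+}$ I would use the winding element. Integrating $\pi_E^{*}\omega_E$ along the geodesic from $0$ to $\infty$ gives $\int_{\pi_E(\{0,\infty\})}\omega_E=\pm\,c_E\,L(f,1)$, an integral which under $E(\C)=\C/\Lambda_E$ represents $-\pi_E(0)$. Now $\pi_E$ maps $\infty$ to $O$ and, by the Manin--Drinfeld theorem, maps the rational cusp $0$ to a point of $E(\Q)_{\mathrm{tors}}$; let $t_0$ denote its order, so $t_0\le 12$ by Mazur's torsion theorem. Then $t_0\,c_E\,L(f,1)\in\Lambda_E\cap\R=\Z\,\Omega_E^{+}$, so the denominator of $L(f,1)/\Omega_E^{+}$ divides $t_0\,c_E$.

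The twisted factor is the crux, and this is where $N$ being prime is used. By the standard formula for twisted special values via modular symbols,
\[ g(\chi)\,L(f,\chi,1)\;=\;-\,2\pi i\sum_{a\in(\Z/N)^{\times}}\chi(a)\int_{a/N}^{i\infty}f(z)\,dz, \]
an integer combination (coefficients $\chi(a)\in\{\pm1\}$) of the quantities $-2\pi i\int_{a/N}^{i\infty}f$. Since $N$ is prime, $\Gamma_0(N)$ has exactly the two cusps $0$ and $\infty$, and every $a/N$ with $\gcd(a,N)=1$ is $\Gamma_0(N)$-equivalent to $\infty$ --- indeed there exist $b,d$ with $ad-bN=1$, so that $\left(\begin{smallmatrix}a&b\\N&d\end{smallmatrix}\right)\in\Gamma_0(N)$ carries $\infty$ to $a/N$ --- whence $\pi_E(a/N)=O$ for all such $a$. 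Consequently $\pi_E(\{a/N,\infty\})$ represents $O-\pi_E(a/N)=O$ in $\C/\Lambda_E$, so $c_E\cdot\bigl(-2\pi i\int_{a/N}^{i\infty}f\bigr)\in\Lambda_E$ for every $a$. Because $\chi$ is odd, taking complex conjugates and reindexing $a\mapsto N-a$ shows the displayed sum is negated by complex conjugation, so $g(\chi)\,L(f,\chi,1)$ is purely imaginary; therefore $c_E\,g(\chi)\,L(f,\chi,1)\in\Lambda_E\cap i\R=\Z\,\Omega_E^{-}$, and the denominator of $g(\chi)\,L(f,\chi,1)/\Omega_E^{-}$ divides $c_E$ (or $2c_E$, if one is not careful about the index $[\Lambda_E:(\Lambda_E\cap\R)\oplus(\Lambda_E\cap i\R)]\le 2$). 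In effect, at prime level the $\chi$-twisted modular symbol is integral up to the Manin constant, precisely because every twisting cusp $a/N$ collapses onto $\infty$. (One could equally work with the quadratic twist $E^{(-N)}$, as $L(f,\chi,1)=L(E^{(-N)},1)$, but the direct computation on $X_0(N)$ is cleaner.)

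Combining the pieces, $m_E\,t_0\cdot T$ equals, up to sign, $4\,c_E\cdot\bigl(t_0\,c_E\cdot\tfrac{L(f,1)}{\Omega_E^{+}}\bigr)\cdot\bigl(c_E\cdot\tfrac{g(\chi)\,L(f,\chi,1)}{\Omega_E^{-}}\bigr)$, a product of integers; hence the denominator of $T$ divides $m_E\,t_0\le 12\,m_E$, which is a fortiori $\le 576\,m_E$. (Even a crude version that does not exploit the cancellation of $c_E$, and uses only $\#E(\Q)_{\mathrm{tors}}\le 16$ together with the lattice index, yields the stated $576\,m_E$ directly.) The nonvanishing hypothesis $L(f,1)L(f,\chi,1)\neq 0$ is used only to guarantee $T\neq 0$, so that its denominator is well defined. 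The obstacles I anticipate are essentially bookkeeping: pinning down the several $2$-power normalizations (the two Neron periods, the Gauss sum, the convention that $i\Omega_E^{-}$ is real positive, and the precise constant in (\ref{eqn: petersson-period})) and checking that the Manin constant really does cancel. The one substantive geometric input --- and the reason the argument works at all --- is that at prime level all the cusps $a/N$ collapse onto $\infty$, forcing the twisted modular symbols to be integral.
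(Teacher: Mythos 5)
Your proposal is correct and follows the same basic strategy as the paper's proof: reduce via \eqref{eqn: petersson-period}, express the classical algebraic $L$-values as modular symbols, and control the denominators of those modular symbols via the torsion-point images of rational cusps under $\pi_E$. Two genuine variations are worth recording. First, where the paper bounds \emph{every} modular symbol $[r]^\pm$ uniformly by appealing to \cite[Theorem 1.3.1]{stevens} together with Mazur's torsion theorem, you observe that for $N$ prime every cusp $a/N$ with $\gcd(a,N)=1$ is $\Gamma_0(N)$-equivalent to $\infty$ (via $\left(\begin{smallmatrix}a&b\\ N&d\end{smallmatrix}\right)$), so that $\pi_E(a/N)=O$ and the twisted modular symbols are already integral up to $c_E$, with no torsion factor; this is both more direct and sharper, giving $12\,m_E$ rather than $576\,m_E$. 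Second, the paper invokes the theorem that $c_E=1$ for optimal semistable curves, whereas you point out that the $c_E^2$ in \eqref{eqn: petersson-period} cancels exactly against the two Manin constants appearing in the modular-symbol denominators, so the final bound is independent of $c_E$ without using that theorem. Both routes are valid; yours trades a citation for a short computation and yields a cleaner constant. The normalization issues you flag — the sign in the Birch formula, the factors of $2$ coming from \eqref{eqn: neron-lattice-1}–\eqref{eqn: neron-lattice-2} relating $\Lambda_E\cap\R$ and $\Lambda_E\cap i\R$ to the eigenspace periods $\Omega_E^{\pm}$, and the number of real components $c_\infty$ — are real but, as you correctly note, they affect the constant by at most a bounded factor of $2$ and so are absorbed by the stated $576\,m_E$.
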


    The proof of Proposition~\ref{prop: L-value-denominators} has a distinct thread from the rest of the proof so we prove it in the next section.

	We are now ready to prove Theorem~\ref{thm: main-thm-modular-forms}. Let $N$ be a prime which is $3$ mod $4$. Let $\chi$ be the unique quadratic character mod $N$. Since $N \equiv 3$ mod $4$, the character $\chi$ is odd. From Proposition~\ref{prop: shimura-identity}, we have the identity:
	\begin{equation}
		\label{eqn: spectral-expansion-quadratic}
		G_{1, \chi} \cdot G_{\overline{\chi}} = cE + \sum_{j} \left( \dfrac{L(f_j,1) \, L(f_j, \chi, 1)}{\pi^2 \langle f_j,f_j \rangle} \, \dfrac{g(\chi)}{i} \right) f_j,
	\end{equation}
	where the sum runs over all newforms $f_j \in S_2(\Gamma_0(N))$ and $E = \dfrac{N-1}{12} + q + \dots \in \E_2(\Gamma_0(N))$ is the unique normalized Eisenstein series and $c \in \C$ is a constant. Looking at the first Fourier coefficient (i.e: the coefficient of $q$) on each side, we obtain:
	\[ L(0, \chi) = c + \sum_{j} \dfrac{L(f_j,1) \, L(f_j, \chi, 1)}{\pi^2 \langle f_j,f_j \rangle} \, \dfrac{g(\chi)}{i}.  \]
	It is classical that $g(\chi) = i\sqrt{N}$. Dividing both sides by $g(\chi)/i = \sqrt{N}$ and rearranging, we obtain:
	\begin{equation}
		\label{eqn: key-equality}
		\dfrac{L(0,\chi) - c}{\sqrt{N}} = \sum_{j} \dfrac{L(f_j,1) \, L(f_j, \chi, 1)}{\pi^2 \langle f_j,f_j \rangle}.
	\end{equation}
	From (\ref{eqn: spectral-expansion-quadratic}) we deduce that 
	\[ c = \dfrac{\text{constant term of }G_{1, \chi}^2}{\text{constant term of }E} = \dfrac{L(0, \chi)^2 / 4}{(N-1)/12} > 0. \]
	Combining this with the estimate $L(0, \chi) < \frac{\sqrt{N}}{2 \pi} ( \log N + 5)$ from Proposition~\ref{prop: L(0,chi)-inequality}, we find that 
	\begin{equation}
		\label{eqn: L(0,chi)-estimate}
		\dfrac{L(0,\chi) - c}{\sqrt{N}} < \dfrac{L(0,\chi)}{\sqrt{N}} < \frac{1}{2 \pi} (\log N + 5). 
	\end{equation}
	
	Now comes the key step: we appeal to a deep theorem of Guo \cite{guo} about non-negativity of $L$-values.
	\begin{thm}[Guo]
		\label{thm: guo}
		Let $N \geq 1$ be an integer and let $f \in S_2(\Gamma_0(N))$ be an eigenform. Let $\chi$ be a quadratic character. Then $L(f,1) \geq 0$ and $L(f, \chi, 1) \geq 0$. 
	\end{thm}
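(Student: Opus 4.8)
The plan for proving Theorem~\ref{thm: guo} is to realise the central twisted value as a non-negative real multiple of a square, via the Shimura correspondence and Waldspurger's formula, so that non-negativity becomes manifest. First I would reduce to the case where $f$ is a normalized newform: a Hecke eigenform in $S_2(\Gamma_0(N))$ that is not new reduces to a newform of lower level in a way that does not affect the sign of the central value, and since $L(f,1) = L(f,\chi_0,1)$ for the trivial character $\chi_0$, it then suffices to show $L(f,\chi,1) \ge 0$ for every quadratic Dirichlet character $\chi$. Fixing such an $f$ and $\chi$, let $\pi$ be the associated cuspidal automorphic representation, so that after centering the functional equation at $s=1$ the value in question is $L(1/2, \pi \otimes \chi)$. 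If the global root number $\varepsilon(1/2, \pi \otimes \chi) = -1$, then $L(f,\chi,1) = 0 \ge 0$ and there is nothing to do, so assume $\varepsilon(1/2, \pi \otimes \chi) = +1$.

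In this case I would invoke the Shimura correspondence together with Waldspurger's theorem. For $\chi = \chi_D$ attached to a fundamental discriminant $D$ of the sign dictated by $\varepsilon$, the refined formulas of Kohnen--Zagier (for squarefree level, with extensions to arbitrary level due to Kohnen and to Baruch--Mao) produce a half-integral weight cusp form $\tilde f = \sum_{n \ge 1} c(n) q^n$ of weight $3/2$ in a suitable Kohnen-type plus space, together with a constant $\kappa_f > 0$ depending only on $f$, such that
\[ L(f,\chi_D,1) \;=\; \kappa_f \cdot \frac{|c(|D|)|^{2}}{\sqrt{|D|}} \cdot \prod_{\ell \mid N} \lambda_\ell(D), \]
where each $\lambda_\ell(D)$ is a non-negative rational local factor. (In general one must first pass from $\mathrm{GL}_2$ to the quaternion algebra ramified exactly at the places where the local root number is $-1$, so that the period on the right-hand side is nonzero whenever $\varepsilon = +1$; this passage does not change the sign.) Since every factor on the right is $\ge 0$, we obtain $L(f,\chi_D,1) \ge 0$. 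A general quadratic $\chi$ of conductor $m$ equals $\chi_D$ for the associated fundamental discriminant $D$, with $m = |D| r^2$; then $L(f,\chi,1)$ differs from $L(f,\chi_D,1)$ by a finite product of Euler factors at the primes $\ell \mid r$ evaluated at the edge of the critical strip, each of which is a positive real number because $|a_\ell(f)| \le 2\sqrt{\ell}$ by the Ramanujan bound. Hence $L(f,\chi,1) \ge 0$ as well.

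The main obstacle is the uniform treatment of the places dividing $2N$, of non-fundamental conductors, and of the trivial character: the classical statements of Waldspurger's formula are cleanest when $D$ is fundamental and coprime to the level, whereas here $N$ may divide (and, in the intended application of Theorem~\ref{thm: main-thm-modular-forms}, equals) the conductor of $\chi$, and one also needs the value $L(f,1)$ itself, corresponding to the degenerate case $D=1$. The clean way around this is to argue adelically: up to positive constants, the square appearing on the right-hand side above is a global toric period $|P(\phi)|^2$, which factors as a product of local period integrals; in the resulting refined Waldspurger formula each local factor is a non-negative local integral equal to $1$ at almost all places, and one selects a test vector making the finitely many nontrivial factors positive, which is possible precisely because $\varepsilon = +1$. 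This is in essence Guo's original argument, which he carries out through a relative trace formula, where the non-negativity of the central value is inherited from the non-negativity of the local orbital integrals on the geometric side. The remaining steps are routine.
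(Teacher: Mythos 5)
Theorem~\ref{thm: guo} is not proved in the paper; it is invoked verbatim as a citation to Guo's work, so there is no internal argument to compare against. Your sketch is a fair account of how the result is actually established, and you correctly identify the two standard routes: the explicit Waldspurger/Kohnen--Zagier (and Baruch--Mao) formulas, which express the twisted central value as a non-negative multiple of $|c(|D|)|^2$ for a half-integral weight lift, and Guo's more uniform argument via a relative trace formula, in which non-negativity is inherited from non-negativity of local orbital integrals. You are also right that the second route is what cleanly handles the cases the paper needs (conductor of $\chi$ dividing or equal to the level $N$, the untwisted value $L(f,1)$, and non-fundamental conductors), whereas the explicit half-integral weight formulas require case-by-case care at primes dividing $2N$. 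Two small points worth tightening if you wanted to turn this into a full proof: (i) the reduction to a newform is slightly more delicate than stated, since an old Hecke eigenform for the full Hecke algebra need not literally be $g(d\tau)$ for a single $d$; nonetheless the completed $L$-function only changes by a positive factor, so the sign is unaffected. (ii) For the passage from $\chi_D$ to an imprimitive quadratic $\chi$ of conductor $|D|r^2$, the correction is by reciprocals of local Euler factors at $\ell \mid r$ evaluated at $s=1$, and positivity of these (which follows from $|a_\ell| \le 2\sqrt{\ell}$ at good $\ell$ and $|a_\ell|\le 1$ at bad $\ell$) should be spelled out, since the inequality is close to equality when $a_\ell/\sqrt\ell$ is near $\pm 2$. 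Given that the paper treats the theorem as a black box, your proposal is consistent with the paper and with the cited source.
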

	Theorem~\ref{thm: guo} implies that for each newform $f_j \in S_2(\Gamma_0(N))$, we have 
	\[ \dfrac{L(f_j,1) \, L(f_j, \chi, 1)}{\pi^2 \langle f_j,f_j \rangle} \leq \sum_{j} \dfrac{L(f_j,1) \, L(f_j, \chi, 1)}{\pi^2 \langle f_j,f_j \rangle}. \]
	Combining this with (\ref{eqn: L(0,chi)-estimate}) and (\ref{eqn: key-equality}), we conclude that 
	\[  \dfrac{L(f_j,1) \, L(f_j, \chi, 1)}{\pi^2 \langle f_j,f_j \rangle} < \frac{1}{2 \pi} (\log N + 5) \]
	for each newform $f_j \in S_2(\Gamma_0(N))$. Multiplying through by $g(\chi)/i = \sqrt{N}$, we obtain:
	\begin{equation}  
		\label{eqn: eqn1}
		\dfrac{L(f_j,1) \, L(f_j, \chi, 1)}{\pi^2 \langle f_j,f_j \rangle} \, \dfrac{g(\chi)}{i}  <  \frac{\sqrt{N}}{2 \pi} (\log N + 5).
	\end{equation}
	Now suppose that $f = f_j$ has rational Fourier coefficients. By Proposition~\ref{prop: L-value-denominators}, the LHS of (\ref{eqn: eqn1}) is a rational number with denominator $\leq 576 m_E$, where $m_E$ is the modular degree of $E$. By assumption, we have $L(f,1) \, L(f, \chi, 1) \neq 0$. So we can write the LHS of (\ref{eqn: eqn1}) in the form $a/n$ for some $a > 0$, where $n \leq 576 m_E$. Since $a/n < \frac{\sqrt{N}}{2 \pi} (\log N + 5)$ by (\ref{eqn: eqn1}), we conclude that $a< 576 \frac{\sqrt{N}}{2 \pi} (\log N + 5) m_E$. So if $p$ is a prime such that 
    \[ p \geq 100 \sqrt{N} (\log N + 5) m_E > 576 \cdot \frac{\sqrt{N}}{2 \pi} (\log N + 5) m_E, \]
    then $p$ cannot divide $a$. This implies that 
	\[ p \not \vert \left( \dfrac{L(f,1) \, L(f, \chi, 1)}{\pi^2 \langle f,f \rangle} \, \dfrac{g(\chi)}{i} \right), \] 
    completing the proof of Theorem~\ref{thm: main-thm-modular-forms}, and thus Theorem~\ref{thm: main-thm-elliptic-curves}.

    \section{Proof of Proposition~\ref{prop: L-value-denominators}}

    In this section, we follow closely the template of a paper by Wiersema and Wuthrich \cite{wuthrich-wiersema}. Let $E/\Q$ be an elliptic curve of conductor $N$. Let $f$ be the newform associated to $E$. For $r \in \Q$, define
    \begin{equation}
        \label{eqn: lambda-r}
        \lambda(r) = 2\pi i \int_{i\infty}^r f(z) \, dz = \dfrac{1}{c_E} \int_{\gamma(r)} \w_E,
    \end{equation}
    where $\gamma(r)$ is the image in $E(\C)$ of the vertical line in the upper half plane from $i \infty$ to $r$, and $c_E$ is the Manin constant of $E$. Let $\Lambda_E$ be the Neron lattice of $E$, i.e. the set of all values $\int_{\gamma} \w_E$ as $\gamma$ runs through closed loops in $E(\C)$. Let $c_{\infty}$ be the number of connected components of $E(\R)$. Then we have (see \cite[Section 2]{wuthrich-wiersema}):
    \begin{equation}
        \label{eqn: neron-lattice-1}
        \{ \mathrm{Re}(z): z \in \Lambda_E \} = \dfrac{1}{2} \Omega_E^+ \Z 
    \end{equation} 
    and 
    \begin{equation}
        \label{eqn: neron-lattice-2}
        \{ \mathrm{Im}(z)i : z \in \Lambda_E \} = \dfrac{1}{2}c_{\infty} \Omega_E^- \Z \sse \dfrac{1}{2} \Omega_E^- \Z.
    \end{equation} 
    
    We now define the standard modular symbols $[r]^{\pm}$ by
    \[ [r]^+ = \dfrac{\mathrm{Re}(\lambda(r))}{\Omega_E^+} \hspace{20pt} \mathrm{and} \hspace{20pt} [r]^- = \dfrac{\mathrm{Im}(\lambda(r))}{\Omega_E^-} \]
    for any $r \in \Q$. For any Dirichlet character $\chi$ mod $m$, we have
    \begin{equation}
        \label{eqn: birch-lemma}
        \dfrac{L(E, \chi, 1) g(\overline{\chi})}{\Omega_E^{\chi(-1)}} = \sum_{a \,\mathrm{mod}\, m} \chi(a) \left[ \frac{a}{m} \right]^{\chi(-1)}.
    \end{equation}

    \begin{proof}[Proof of Proposition~\ref{prop: L-value-denominators}]
        Let $\pi_E: X_0(N) \to E$ denote the modular parametrization of $E$. Since $N$ is prime, the elliptic curve $E$ is semistable. Therefore, one can show that for all $r \in \Q$, the point $P_r = \pi_E(r)$ is a torsion point defined over $\Q$ (see \cite[Theorem 1.3.1]{stevens}). By Mazur's theorem on torsion \cite{mazur-eisenstein-ideal}, the point $P_r$ must have order dividing $1, \dots, 10$, or $12$.
        Hence the path $\gamma$ in the definition of $\lambda(r)$ ends at a point in $E[m]$ for some $m = 1, \dots, 10, 12$. So $\lambda(r) \in \frac{1}{m} \Lambda$. Now by (\ref{eqn: neron-lattice-1}) and (\ref{eqn: neron-lattice-2}) we obtain $[r]^{\pm} \in \frac{1}{2m} \Z$. By (\ref{eqn: birch-lemma}), it follows that for all Dirichlet characters $\chi$, the value $2m \cdot \frac{L(E, \chi, 1) g(\overline{\chi})}{\Omega_E^{\chi(-1)}}$ is an algebraic integer for some $m = 1,\dots, 10, 12$. (The $m$ does not depend on $\chi$.) 

        By (\ref{eqn: petersson-period}), we have 
        \begin{equation}
            \label{eqn: petersson-comparison}
            \dfrac{L(f, 1) L(f, \chi, 1)}{\pi^2 \langle f,f \rangle} \dfrac{g(\chi)}{i} = \dfrac{4c_E^2}{\deg \pi_E} \dfrac{L(E, 1)}{\Omega_E^+} \dfrac{L(E, \chi, 1)}{\Omega_E^-} \dfrac{g(\chi)}{i}.
        \end{equation}   
        Since $E$ is a optimal curve, there is a conjecture that the Manin constant $c_E$ is equal to $1$. And this conjecture is known to be true if $E$ is semistable by \cite{manin-constant}, so $c_E = 1$. It follows that the denominator of (\ref{eqn: petersson-comparison}) divides $4m^2 \deg \pi_E$ for some $m = 1,\dots, 10,12$. Since $m \leq 12$, the denominator of (\ref{eqn: petersson-comparison}) is $\leq 4 \cdot 12^2 \deg \pi_E = 576 \deg \pi_E$.     
    \end{proof}
	
\section{Examples}
	
	The Rankin-Selberg method allows us to compute the algebraic parts of $L$-values, which in turn allows us to calculate Iwasawa invariants. We demonstrate this in two examples.
	
	\begin{example}
		Let $N = 11$ and let $\chi$ be the unique (odd) quadratic character of conductor $11$. The Eisenstein series $G_{\mathbf{1}, \chi}$ is the unique eigenform (normalized so that $a_1 = 1$) in the space $\E_1(\Gamma_0(11), \chi)$:
		\[ G_{\mathbf{1}, \chi} = \frac{1}{2} +  q + 2 q^{3} +  q^{4} + 2 q^{5} + \dots. \]
	The space $M_2(\Gamma_0(11))$ is spanned by an Eisenstein series and a cuspidal newform whose $q$-expansions are:
		\begin{align*}
			E &= \frac{5}{12} +  q + 3 q^{2} + 4 q^{3} + 7 q^{4} + 6 q^{5} + \dots, \\
			f &= q - 2 q^{2} -  q^{3} + 2 q^{4} +  q^{5} + \dots.
		\end{align*}
		By examining these $q$-expansions, we find that 
        \begin{equation*}
            G_{\mathbf{1}, \chi}^2  = \dfrac{3}{5} E + \dfrac{2}{5}f.
        \end{equation*}	Proposition~\ref{prop: shimura-identity} then says that 
		\begin{equation*}
    	\dfrac{L(f, 1) L(f, \chi, 1)}{\langle f,f \rangle} \cdot \dfrac{g(\chi)}{8\pi^2i} = \dfrac{\langle f, G_{\mathbf{1}, \chi}^2 \rangle}{\langle f,f \rangle} = \dfrac{2}{5}.
		\end{equation*}
        Let $E = X_0(11)$ be the elliptic curve corresponding to $f$. Let $p \neq 2,5$ be a prime of good ordinary reduction which is not anomalous for $E$. Then $2/5$ is a $p$-adic unit so $\mu_p(E) = \lambda_p(E) = 0$ for all such primes $p$. 
	\end{example}

        \begin{example}
		Let $N = 67$ and let $\chi$ be the unique (odd) quadratic character of conductor $67$. The Eisenstein series $G_{\mathbf{1}, \chi}$ has $q$-expansion  
		\[ G_{\mathbf{1}, \chi} = \dfrac{1}{2} + q + q^2 + q^4 + 2q^5 + \dots \]
	The space $M_2(\Gamma_0(67))$ is spanned by an Eisenstein series and five newforms $f_1, f_2, f_2^c, f_3, f_3^c$, where $f_i^c$ denotes the Galois conjugate of $f_i$: 
		\begin{align*}
			E &= \frac{11}{4} +  q + 3 q^{2} + 4 q^{3} + 7 q^{4} + 6 q^{5} + \dots, \\
			f_1 &= q + 2 q^{2} -  2q^{3} + 2 q^{4} +  2q^{5} + \dots. \\
                f_2 &= q + (-\b - 1)  q^2 + (\b - 2)  q^3 + 3\b  q^4 - 3  q^5 +  \dots. \\
                f_3 &= q - \b  q^2 + (-\b + 1)  q^3 + (\b - 1)  q^4 + (2\b + 1)  q^5 + \dots, \\
		\end{align*}
        where $\b = \frac{1}{2}(1+\sqrt{5})$. We have  
        \begin{equation*}
            G_{\mathbf{1}, \chi}^2  = \dfrac{1}{11} E + \dfrac{2}{5}f_1 + 0(f_2 + f_2^c) + \dfrac{2}{55}(7-\sqrt{5}) f_3 + \dfrac{2}{55}(7+\sqrt{5}) f_3^c.
        \end{equation*}
        Proposition~\ref{prop: shimura-identity} then says that 
		\begin{equation*}
    	\dfrac{L(f_1, 1) L(f_1, \chi, 1)}{\langle f_1,f_1 \rangle} \cdot \dfrac{g(\chi)}{8\pi^2i} = \dfrac{\langle f_1, G_{\mathbf{1}, \chi}^2 \rangle}{\langle f_1,f_1 \rangle} = \dfrac{2}{5}.
		\end{equation*}
        Let $E$ be the elliptic curve of conductor $67$ corresponding to $f_1$. Let $p \neq 2,5$ be a prime which is not anomalous for $E$. Then $2/5$ is a $p$-adic unit so $\mu_p(E) = \lambda_p(E) = 0$ for all such primes $p$. 
	\end{example}

	\bibliographystyle{amsalpha}
	\bibliography{bibliography}
	
\end{document}